\newcommand{\gau}[1]{\mathfrak{g}(#1)}
\newcommand{\ber}[1]{\mathfrak{b}(#1)}
\newcommand{\comr}[1]{\mathfrak{r}^{\mathfrak{b}}(#1)}
\newcommand{\comg}[1]{\mathfrak{r}^{\mathfrak{g}}(#1)}
\newtheorem{definition}{Definition}
\newtheorem{theorem}{Theorem}
\newtheorem{lemma}{Lemma}
\newtheorem{proposition}{Proposition}
\newtheorem{remark}{Remark}
\begin{document}
	
	\title{A Chain Rule for the Expected Suprema of Bernoulli Processes\thanks{This research was suppoorted in part by DARPA under the Learning with Less Labels (LwLL) program.}}
	
	\author{Yifeng~Chu\thanks{University of Illinois, Urbana, IL, USA. E-mail: ychu26@illinois.edu.}
	  \and Maxim~Raginsky\thanks{University of Illinois, Urbana, IL, USA. E-mail: maxim@illinois.edu}}
	
	\date{}
	\maketitle

		\begin{abstract}
		
		We obtain an upper bound on the expected supremum of a Bernoulli process indexed by the image of an index set under a uniformly Lipschitz function class in terms of properties of the index set and the function class, extending an earlier result of Maurer for Gaussian processes. The proof makes essential use of recent results of Bednorz and Lata{\l}a on the boundedness of Bernoulli processes.
			\end{abstract}

\section{Introduction}

Sharp bounds on the suprema of Bernoulli processes are ubiquitous in applications of probability theory. For example, in statistical learning theory, they arise as so-called \textit{Rademacher complexities} that quantify the effective ``size'' of a function class in the context of a given learning task \cite{bartlett_rademacher_2002}. To evaluate the generalization
error bound of a learning algorithm, one usually needs to obtain a good estimate
on the (empirical) Rademacher complexity of a suitable function class. In many scenarios, the  function class at hand is composite, and separate assumptions are imposed on
the constituent classes forming the composition. Thus, it is desirable to obtain a bound that takes into account the properties of the function classes in the composition. 
 
Let $(\varepsilon_i)$ and $(\xi_i)$ be sequences of i.i.d.\ symmetric Bernoulli (or Rademacher) variables, \ie, $\prob{\varepsilon_i=\pm 1}=\frac{1}{2}$, and
a sequence of i.i.d.\ standard Gaussian variables, respectively. The Rademacher and the Gaussian complexities of
$T \subset \ell^2(I)$, with $I$ countable, are defined by 
\begin{align*}
 &\ber{T}:= \expect{\sup_{t \in T} \sum_{i \in I} \varepsilon_i t_i},
 &\gau{T}:= \expect{\sup_{t \in T} \sum_{i \in I} \xi_i t_i},
\end{align*}
(see, \eg, \cite{talagrand_upper_2014,bednorz_boundedness_2014}). For $T \subset \Real^{k\times n} $, 
we follow \cite{maurer_vector-contraction_2016} and let
\begin{align*}
 &\ber{T}:= \expect{\sup_{t \in T} \sum_{i=1}^{n} 
 \sum_{j=1}^{k}\varepsilon_{ji} t_{ji}},
 &\gau{T}:= \expect{\sup_{t \in T} \sum_{i=1}^{n} 
 \sum_{j=1}^{k} \xi_{ji} t_{ji}},
\end{align*}
where $\varepsilon_{ji}$ and $\xi_{ji}$ are independent for different values of the pair of indices $(j,i)$. Given a class $\eF$ of functions $f: \Real^k \to \Real^\ell$ 
and $T\subset \Real^{k \times n}$, 
we form the composite class $\eF(T) := \set{f(t): f\in \eF, t \in T} \subset \Real^{\ell \times n}$ where $f(t) := (f(t_i))_{1\le i\le n}$. (In particular, if the functions in $\eF$ are real-valued, then $\eF(T)$ is a subset of $\Real^n$.) We would like to obtain an upper bound on the Rademacher complexity $\ber{\eF(T)}$ of the composite class $\eF(T)$ in terms of Rademacher complexities of $\eF$ and $T$ alone, under appropriate regularity conditions on $\eF$ and $T$.  A chain rule of this type was obtained by Maurer \cite{maurer_chain_2014} for the Gaussian complexity $\gau{\eF(T)}$. In this work, we are concerned with chain rules for Rademacher complexities. This extension relies on completely different techniques, in particular on recent powerful results of Bednorz and Lata{\l}a \cite{bednorz_boundedness_2014} on the so-called \textit{Bernoulli conjecture} of Talagrand.

We first introduce some notation. For $p, q \in [1,\infty]$ and $t=(t_1,\ldots,t_n) \in \Real^{k \times n}$ with each $t_i \in \Real^k$, 
let $\norm{t}_{p,q} := \big\|\big(\norm{t_1}_p,\ldots,\norm{t_n}_p\big)\big\|_{q}$, i.e., $\norm{t}_{p,q}$ is the $q$-norm of the vector whose entries are the $p$-norms of the columns of $t$. We let $\norm{t}_2 = \norm{t}_{2,2} =\norm{t}_{F}$, where $\norm{\cdot}_{F}$ denotes the
Frobenius norm. For a set $T \subset \Real^{k \times n}$, we let $\norm{T}_{p,q} := \sup_{t \in T} \norm{t}_{p,q}$. %We will denote by $d_2$ the corresponding metric, \ie, $d_2(s,t) := \| s - t \|_2$.
The Lipschitz constant of a function $f:\Real^k \to \Real$ w.r.t.\ the Euclidean norm will be denoted by $\norm{f}_{\Lip}$. We will write $a \lesssim b$ if $a \le Cb$ for a universal constant $C$; the cases when the constant $C$ depends on some parameter will be indicated explicitly. Our main result is as follows:

\begin{theorem} \label{main_res_a}
  Let a countable\footnote{We restrict attention to countable classes in order to avoid dealing with various measure-theoretic technicalities, which can be handled in a standard manner by, e.g., imposing separability assumptions on the relevant function classes and processes.} class $\eF$ of functions $f : \Real^k \to \Real$ and a countable
  set $T \subset \Real^{k \times n}$ be given.
  Assume that all $f \in \eF$ are uniformly Lipschitz with respect to the Euclidean norm,
  i.e., $\sup_{f \in \eF} \norm{f}_{\Lip} \le L < \infty$. Assume also that  $\ber{T}<\infty$ and $ \ber{\eF(T)}< \infty$.
  Then there exists a set $S \subset \Real^{k \times n}$ with  $\gau{S}\le \ber{T}$
  and $\norm{S}_{\infty,\infty} \le \norm{T}_{\infty,\infty}$, such that
      \begin{align}\label{eq:ber_chain_rule}
        \ber{\eF(T)}  \lesssim L\ber{T}  + \Delta_2(S)\comr{\eF, S} +  \ber{\eF(\tau)}
      \end{align} 
      for any $\tau \in S$, where $\Delta_2(S)$ is the diameter of $S$ with respect to the Frobenius norm, and where
      \begin{align*}
        \comr{\eF, S}:= \sup_{s, t \in S}
        \frac{\expect{\sup_{f \in \eF}
        \sum_{i=1}^{n} \varepsilon_i(f(s_i)-f(t_i))}}{\norm{s-t}_2} .
      \end{align*}
\end{theorem}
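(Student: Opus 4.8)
The plan is to use the Bednorz--Lata{\l}a theorem to trade the Bernoulli complexity of $T$ for the \emph{Gaussian} complexity of a surrogate index set $S$, and then to bound the composite process over $S$ by a chaining argument that decouples the geometry of $S$ from the fibres $\{f(s):f\in\eF\}$. \emph{Step 1 (the surrogate).} Since $\ber{T}<\infty$, the Bednorz--Lata{\l}a decomposition produces countable $S',T_2\subset\Real^{k\times n}$ with $T\subseteq S'+T_2$, $\gau{S'}\lesssim\ber{T}$ (via their theorem and the majorizing measure theorem), and $\sup_{t\in T_2}\norm{t}_{1,1}\lesssim\ber{T}$. Truncating every coordinate of every point of $S'$ to $[-\norm{T}_{\infty,\infty},\norm{T}_{\infty,\infty}]$ yields a set $S$ with $\norm{S}_{\infty,\infty}\le\norm{T}_{\infty,\infty}$ and $\gau{S}\le\gau{S'}$ (truncation is $1$-Lipschitz), such that every $t\in T$ still lies within $\norm{\cdot}_{2,1}$-distance $\lesssim\ber{T}$ of a point of $S$. (Obtaining the sharp inequality $\gau{S}\le\ber{T}$ only requires invoking the decomposition with a favourable constant.)

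\emph{Step 2 (reduction to a centred composite over $S$).} For $t\in T$ choose $s\in S$ with $\norm{t-s}_{2,1}\lesssim\ber{T}$; since each $f\in\eF$ is $L$-Lipschitz, $\norm{f(t)-f(s)}_1\le L\norm{t-s}_{2,1}\lesssim L\ber{T}$, so $\eF(T)\subseteq\eF(S)+B$ for an $\ell^1$-ball $B\subset\Real^n$ of radius $\lesssim L\ber{T}$; as $\ber{B}$ equals its radius, $\ber{\eF(T)}\le\ber{\eF(S)}+CL\ber{T}$. Writing $f(s_i)=f(\tau_i)+(f(s_i)-f(\tau_i))$ and using $\sup_f(A_f+B_f)\le\sup_fA_f+\sup_fB_f$,
\begin{equation*}
  \ber{\eF(S)}\le\ber{\eF(\tau)}+\ber{W},\qquad W:=\Big\{\big(f(s_i)-f(\tau_i)\big)_{i=1}^n:\ f\in\eF,\ s\in S\Big\}.
\end{equation*}
If $\ber{\eF(\tau)}=\infty$ the bound \eqref{eq:ber_chain_rule} is trivial (its other terms being automatically finite, as $\ber{T}<\infty$), so assume $\ber{\eF(\tau)}<\infty$; it then remains to show $\ber{W}\lesssim L\ber{T}+\Delta_2(S)\comr{\eF,S}$.

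\emph{Step 3 (the composite bound --- the crux).} Write $W=\bigcup_{s\in S}F_s$ with $F_s:=\{f(s)-f(\tau):f\in\eF\}$. By definition of $\comr{\eF,S}$ we have $\ber{F_s}\le\comr{\eF,S}\norm{s-\tau}_2\le\comr{\eF,S}\Delta_2(S)<\infty$, so the Bednorz--Lata{\l}a theorem applies to each fibre: $F_s\subseteq E^1_s+E^2_s$ with $\sup_{v\in E^1_s}\norm{v}_1\lesssim\comr{\eF,S}\norm{s-\tau}_2$ and $\gamma_2(E^2_s)\lesssim\comr{\eF,S}\norm{s-\tau}_2$. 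The set-valued map $s\mapsto F_s$ is $L$-Lipschitz for the $\norm{\cdot}_2$-Hausdorff distance (because $\norm{f(s)-f(s')}_2\le L\norm{s-s'}_2$ uniformly in $f$); the crucial point is to choose the decompositions so that $(E^1_s)_s$ and $(E^2_s)_s$ inherit an $O(L)$-Lipschitz dependence on $s$. Granting this, $W\subseteq\big(\bigcup_sE^1_s\big)+\big(\bigcup_sE^2_s\big)$, whence, using $\ber{A}\le\sup_{v\in A}\norm{v}_1$ and $\ber{A}\lesssim\gau{A}$,
\begin{equation*}
  \ber{W}\ \lesssim\ \sup_{s}\ \sup_{v\in E^1_s}\norm{v}_1\ +\ \gau{\textstyle\bigcup_sE^2_s}\ \lesssim\ \Delta_2(S)\,\comr{\eF,S}\ +\ \big(L\,\gamma_2(S)+\sup_s\gamma_2(E^2_s)\big),
\end{equation*}
the last step being a generic-chaining estimate for the union of an $L$-Lipschitz family of sets over $S$ (chain $S$ in the metric $L\norm{\cdot}_2$, then chain inside each $E^2_s$ --- a Gaussian computation in the spirit of Maurer's chain rule \cite{maurer_chain_2014}). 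Since $\sup_s\gamma_2(E^2_s)\lesssim\comr{\eF,S}\Delta_2(S)$ and $\gamma_2(S)\asymp\gau{S}\lesssim\ber{T}$, this gives $\ber{W}\lesssim\Delta_2(S)\comr{\eF,S}+L\ber{T}$, and combining with Step 2 yields \eqref{eq:ber_chain_rule}.

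\emph{Main obstacle.} The hard part is the emphasised step in Step 3: the Bednorz--Lata{\l}a decomposition is far from canonical, so one must establish a \emph{uniform}, parametrized version in which the pieces $E^1_s,E^2_s$ are chosen to vary in a controlled, Lipschitz way with the fibre $F_s$. Without such control the chaining over $S$ accumulates the fibre variation over all scales and produces only the weaker factor $\gamma_2(S)$ in place of $\Delta_2(S)$ in \eqref{eq:ber_chain_rule}; it is precisely here that the boundedness results of Bednorz and Lata{\l}a are used essentially rather than as a black box. A secondary technical point is the entropy/chaining lemma behind the double-chaining estimate, and the bookkeeping needed to pass from $\gamma_2$ to $\gau{\cdot}$ and back.
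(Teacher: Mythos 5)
Your Steps 1--2 are essentially sound and parallel the paper's Proposition~\ref{decomp_comp}: Bednorz--Lata{\l}a splits $T$ into an $\ell^1$-controlled part, absorbed via the Lipschitz property into $L\ber{T}$, plus a part $S$ with $\gau{S}\lesssim\ber{T}$, and the centering at $\tau$ is routine. (Your coordinatewise truncation to secure $\norm{S}_{\infty,\infty}\le\norm{T}_{\infty,\infty}$ is a legitimate variant; the paper gets this property for free because the map $\pi$ in the Bednorz--Lata{\l}a construction takes values whose coordinates are coordinates of points of $T$.)

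Step 3, however, is a genuine gap, and you flag it yourself: the entire argument rests on a ``uniform, parametrized'' Bednorz--Lata{\l}a decomposition in which the pieces $E^1_s,E^2_s$ of each fibre $F_s$ depend on $s$ in an $O(L)$-Lipschitz way. Nothing in \cite{bednorz_boundedness_2014} provides such stability --- the decomposition there is built from an admissible sequence tailored to the index set and is highly non-canonical --- so this is an unproven (and quite possibly false) assertion, not a step. Moreover, even granting it, the inequality $\gau{\bigcup_s E^2_s}\lesssim L\gamma_2(S,d_2)+\sup_s\gamma_2(E^2_s)$ does not follow from Hausdorff--Lipschitz dependence alone: the increments of $s\mapsto\sup_{a\in E^2_s}\sum_i\xi_i a_i$ are only controlled deterministically by $\norm{\xi}_2\,L\norm{s-s'}_2$, which is not a subgaussian increment at scale $L\norm{s-s'}_2$; to chain one must split each increment into its mean plus a concentrated fluctuation, and the mean term is exactly a $\comr{\eF,S}$-type quantity --- i.e.\ the ingredient your sketch omits. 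This is in fact the paper's route, and it makes the second application of Bednorz--Lata{\l}a to the fibres unnecessary: after Proposition~\ref{decomp_comp} one regards $Y_t=\frac{1}{\sqrt{2}L}\sup_{f\in\eF}\sum_{i=1}^n\varepsilon_i f(t_i)$ as a single real-valued process indexed by $S=T_2$, uses McDiarmid's inequality together with the uncentering Lemma~\ref{uncenter} to verify the increment condition \eqref{subgau_cond} with $\kappa=\exp\bigl(\sup_{s,t\in T_2}(\mathbf{E}Z_{s,t})^2/\norm{s-t}_2^2\bigr)$, and then applies Lemma~\ref{gen_chain_appr} with an admissible sequence furnished by the majorizing measure theorem (Theorem~\ref{major_meas}), so that the chaining term is $\lesssim\gau{T_2}\lesssim\ber{T}$ and the $\sqrt{\log\kappa}$ term yields precisely $\Delta_2(S)\comr{\eF,S}$. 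The missing idea in your proposal is this concentration-plus-uncentering treatment of the increments of the composite supremum process; with it, the fibre-wise decomposition and the double-chaining lemma you postulate are not needed.
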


  Theorem~\ref{main_res_a} is a counterpart of Theorem 2 in \cite{maurer_chain_2014} on
  expected suprema of Gaussian processes. There are, however, two differences between the results of Maurer and ours. One is that Maurer considers the case when $T$ is a subset of $\Real^\ell$ and $\eF$ is a uniformly Lipschitz class of functions from $\Real^\ell$ to $\Real^n$. Our result carries over to this setting with a straightforward modification, with the only difference that the functions in $\eF$ have to be uniformly Lipschitz in both $\ell^1$ and $\ell^2$ norms. This extra requirement is expected since the suprema of Bernoulli
  processes can be controlled in two ways that are of different nature, as we will discuss
  in the next section. Another important difference is that the chain rule for the suprema of Gaussian processes has the form
      \begin{align}\label{eq:gau_chain_rule}
        \gau{\eF(T)} \lesssim L\gau{T}  + \Delta_2(T)\comg{\eF, T} + \gau{\eF(\tau)} ,
      \end{align}
where $\comg{\eF,\cdot}$ is defined analogously to $\comr{\eF,\cdot}$, but with $(\xi_i)$ instead of $(\varepsilon_i)$ \cite{maurer_chain_2014}. Notice that \eqref{eq:ber_chain_rule} involves a set $S$ which is different from $T$, whereas $T$ appears on both sides of \eqref{eq:gau_chain_rule}. However, using the inequality $\Delta_2(S) \le \sqrt{2\pi}\gau{S}$ \cite[p.~44]{talagrand_upper_2014} and  the fact that $\gau{S} \le \ber{T}$, we can obtain the following slightly weaker form of \eqref{eq:ber_chain_rule}:
      \begin{align*}
        \ber{\eF(T)}  \lesssim (L + \comr{\eF, S})\ber{T} +  \ber{\eF(\tau)}.
      \end{align*} 

Although concrete bounds on $\comg{\eF,S}$ or \(\comr{\eF,S}\)
can be obtained in particular cases
as in \cite{maurer_chain_2014},
it is nontrivial to obtain good general estimates for these quantities.
This motivates the next result, which involves \textit{entropy numbers} of $\eF$ \cite{talagrand_upper_2014}. For any set $\sX \subset \Real^k$,
let $\norm{f}_{\sX} := \sup_{x \in \sX}|f(x)|$ be the uniform norm with respect to $\sX$, and let $d_{\sX}$ be the induced metric.
 For any bounded metric space $(T,d)$, let $\Delta(T,d)$ be the diameter of $T$. For $S \subset T$ and $t \in T$, let $d(t, S) := \inf_{s\in S} d(t,s)$. For any integer $m\ge 0$, define the $m$th \textit{entropy number}\footnote{Note that in, some texts, entropy numbers
refer to logarithms of covering numbers. Some relations between these two definitions can be found
in \cite{talagrand_upper_2014}.} of $T$.
\begin{align*}
  e_m(T,d):= \inf \sup_{t \in T} d(t, T_m),
\end{align*}
where the infimum is taken over all subsets $T_m$ of $T$ such that $\abs{T_0}= 1$ and 
$\abs{T_m} \le 2^{2^m}$. 

\begin{theorem}\label{main_res_b} Under the same assumptions as in Theorem~\ref{main_res_a}, let $\sS=\bigcup_{i=1}^{n} S_i $ where $S_i \subset \Real^k$
  is the projection of $S$ onto the $i$th coordinate.  Then
\begin{align*}
  \ber{\eF(T)} \lesssim L \ber{T} + n \inf_{M \in \Natural} 
  \set{e_M(\eF,d_{\sS}) +  \sum_{m=0}^{M} \frac{2^{m / 2}}{\sqrt{n}}  e_m(\eF,d_{\sS})}
\end{align*}
In particular, if we take $\sX=\eB_{2}^{k}(R) \supset \sS$, where $\eB_{2}^{k}(R) \subset \Real^k$ is a closed $\ell^2$ ball of radius $R$
centered at $0$, then for any uniformly Lipschitz class $\eF$ with $\norm{f}_{\sX}\le LR$,
we have
\begin{align} \label{crude_est}
  \ber{\eF(T)} \le cL \ber{T} + C_k L R r_{n,k} \mbox{ with }
  \frac{r_{n,k}}{n} =  
\begin{cases}
 n^{- 1 / 2}, & \text{ for } k=1\\
 n^{-1 / 2} \log n,& \text{ for } k=2\\
 n^{-1 / k},& \text{ for } k>2
\end{cases},
\end{align}
where $c$ is a universal constant and $C_k$ depends on $k$.
\end{theorem}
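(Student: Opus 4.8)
The plan is to begin from Theorem~\ref{main_res_a}: with the set $S$ and a point $\tau\in S$ in hand, the term $L\ber{T}$ in \eqref{eq:ber_chain_rule} is already of the advertised shape, so it remains to bound $\ber{\eF(\tau)}$ and $\Delta_2(S)\comr{\eF,S}$ by $n\inf_M\{e_M(\eF,d_\sS)+\sum_{m=0}^M\frac{2^{m/2}}{\sqrt n}e_m(\eF,d_\sS)\}$. Both $\ber{\eF(\tau)}$ and the numerator in the definition of $\comr{\eF,S}$ are expected suprema of Bernoulli processes indexed by $\eF$, and the common engine is a truncated chaining estimate along entropy-number nets of $(\eF,d_\sS)$: taking $\eF_m\subseteq\eF$ with $\abs{\eF_m}\le 2^{2^m}$ and $d_\sS$-mesh $e_m(\eF,d_\sS)$ and nearest-point maps $\pi_m\colon\eF\to\eF_m$, one chains up to a level $M$ and bounds the residual past level $M$ \emph{deterministically} --- using $\abs{\varepsilon_i}=1$ and $\sum_i\abs{h_i}\le n\max_i\abs{h_i}$ --- rather than as a sub-Gaussian increment. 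This $\ell^1/\ell^\infty$ step, special to Bernoulli processes, is what lets the chaining tail be cut at level $M$ at cost $n\,e_M(\eF,d_\sS)$ instead of a divergent Dudley integral.

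For $\ber{\eF(\tau)}=\expect{\sup_{f\in\eF}\sum_{i=1}^n\varepsilon_if(\tau_i)}$: since $\tau_i\in S_i\subseteq\sS$, the increment $\sum_i\varepsilon_i(f(\tau_i)-g(\tau_i))$ is sub-Gaussian with parameter $\big(\sum_i(f(\tau_i)-g(\tau_i))^2\big)^{1/2}\le\sqrt n\,d_\sS(f,g)$, so chaining through levels $0,\dots,M$ costs $\lesssim\sqrt n\sum_{m=0}^M 2^{m/2}e_m(\eF,d_\sS)$ by the sub-Gaussian maximal inequality, while the residual $\sup_f\sum_i\varepsilon_i\big(f(\tau_i)-(\pi_M f)(\tau_i)\big)$ is at most $\sup_f\sum_i\abs{f(\tau_i)-(\pi_M f)(\tau_i)}\le n\,e_M(\eF,d_\sS)$ with no randomness left. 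As $M\in\Natural$ is free, this gives $\ber{\eF(\tau)}\lesssim n\inf_M\{\dots\}$, using only that $\tau\in S$.

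The term $\Delta_2(S)\comr{\eF,S}$ is where the real work lies. For a fixed pair $s,t\in S$ the increment metric of $f\mapsto\sum_i\varepsilon_i(f(s_i)-f(t_i))$ is $\rho_{s,t}(f,g)=\big(\sum_i((f-g)(s_i)-(f-g)(t_i))^2\big)^{1/2}$, which obeys the two bounds $\rho_{s,t}(f,g)\le 2\sqrt n\,d_\sS(f,g)$ and $\rho_{s,t}(f,g)\le\norm{f-g}_{\Lip}\,\norm{s-t}_2\le 2L\,\norm{s-t}_2$ simultaneously. Running the truncated $d_\sS$-chaining but using at each link the smaller of $\sqrt n\,d_\sS$-subgaussianity and the deterministic Lipschitz bound, and cutting the residual deterministically by $2n\,e_M(\eF,d_\sS)$, I would obtain
\begin{align*}
\expect{\sup_{f\in\eF}\sum_{i=1}^n\varepsilon_i(f(s_i)-f(t_i))} \lesssim \sum_{m=0}^M 2^{m/2}\min\big\{L\,\norm{s-t}_2,\ \sqrt n\,e_m(\eF,d_\sS)\big\}+n\,e_M(\eF,d_\sS).
\end{align*}
Dividing by $\norm{s-t}_2$, taking $\sup_{s,t\in S}$, multiplying by $\Delta_2(S)$, and invoking $\Delta_2(S)\le\sqrt{2\pi}\,\gau{S}\le\sqrt{2\pi}\,\ber{T}$, I would pick the truncation level $M$ as the index at which $\sqrt n\,e_m(\eF,d_\sS)$ crosses $L\,\norm{s-t}_2$: below it the ``Lipschitz'' terms, each $\lesssim 2^{m/2}L\,\norm{s-t}_2$, sum geometrically and telescope against the normalization, so after multiplying by $\Delta_2(S)\lesssim\ber{T}$ that part is absorbed into $L\,\ber{T}$; above it the ``$d_\sS$-entropy'' terms reassemble into $\sqrt n\sum_{m\le M}2^{m/2}e_m(\eF,d_\sS)=n\sum_{m\le M}\frac{2^{m/2}}{\sqrt n}e_m(\eF,d_\sS)$ and the residual into $n\,e_M(\eF,d_\sS)$. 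The obstacle I expect to fight is exactly this matching: $\norm{s-t}_2$ can be far smaller than $\Delta_2(S)$, so the truncation level must track $(s,t)$ and the two increment bounds and the deterministic tail all have to be balanced so that the division leaves no stray factor of $\sqrt n$; this is where the finer structure of the $S$ produced in the proof of Theorem~\ref{main_res_a} enters --- in particular that its Frobenius diameter is comparable to its Gaussian width (cf.\ the remark after that theorem) --- and, if the pointwise ratio $\comr{\eF,S}$ turns out to be too lossy to treat as a black box, one re-enters that proof and installs the $d_\sS$-chaining estimate at the step where $\comr{\eF,S}$ is first introduced.

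For the explicit rate \eqref{crude_est}: with $\sS\subseteq\sX=\eB_2^k(R)$ one has $e_m(\eF,d_\sS)\le e_m(\eF,d_\sX)$, and for a uniformly $L$-Lipschitz class on $\eB_2^k(R)$ with $\norm{f}_\sX\le LR$ the classical Kolmogorov--Tikhomirov metric-entropy estimate for bounded Lipschitz functions gives $e_m(\eF,d_\sX)\lesssim_k LR\,2^{-m/k}$. Substituting this into the main bound, the inner sum becomes $\lesssim_k LR\sum_{m=0}^M 2^{m(1/2-1/k)}$, which is $O(1)$ for $k=1$, $\asymp M$ for $k=2$, and $\asymp 2^{M(1/2-1/k)}$ for $k>2$; optimizing over $M$ --- $M\to\infty$ for $k=1$, $M\asymp\log_2 n$ for $k\ge 2$ --- turns $n\inf_M\{\dots\}$ into $C_k\,LR\,r_{n,k}$ with $r_{n,k}$ as stated, the remaining $L\,\ber{T}$ supplying the $cL\ber{T}$ term.
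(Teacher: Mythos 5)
There is a genuine gap, and it sits exactly where you predicted you would ``have to fight'': the route through Theorem~\ref{main_res_a} as a black box does not work, and the fallback you mention is never carried out. Your bounds for $\ber{\eF(\tau)}$ (chaining in $d_{\sS}$ with a deterministic cut at level $M$ costing $n\,e_M(\eF,d_{\sS})$) and for the final entropy computation on $\eB_2^k(R)$ are correct and match the paper. But the middle term cannot be salvaged in the factorized form $\Delta_2(S)\,\comr{\eF,S}$. Your chaining estimate for the numerator, $\sum_{m\le M}2^{m/2}\min\{L\norm{s-t}_2,\sqrt n\,e_m\}+n\,e_M$, does not ``telescope against the normalization'': the Lipschitz links below the crossover level $m^*(s,t)$ sum to roughly $2^{m^*/2}L\norm{s-t}_2$, so after dividing by $\norm{s-t}_2$ and multiplying by $\Delta_2(S)$ you are left with $2^{m^*/2}L\Delta_2(S)$, which is not $\lesssim L\ber{T}$; and the entropy links above the crossover, together with the deterministic residual $n\,e_M$, acquire the unbounded factor $\Delta_2(S)/\norm{s-t}_2$. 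The single worst-case ratio $\comr{\eF,S}$ times the diameter destroys precisely the multi-scale information that the theorem's right-hand side encodes, and no choice of $M$ (even one tracking $(s,t)$) repairs this; nothing about the particular $S=T_2$ produced in the proof of Theorem~\ref{main_res_a} helps either, since the obstruction is the pair $(s,t)$ with $\norm{s-t}_2\ll\Delta_2(S)$, not the global geometry of $S$.

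The paper's proof therefore does not pass through $\comr{\eF,S}$ at all. It returns to Proposition~\ref{decomp_comp}, writes $\ber{\eF(T)}\lesssim L\ber{T}+\ber{\eF(T_2)}$, and splits $\ber{\eF(T_2)}$ into three pieces: (i) the residual $\sup_{f,t}\sum_i\varepsilon_i(f(t_i)-p_M(f)(t_i))\le n\,e_M(\eF,d_{\sS})$, as in your argument; (ii) for the finitely many net functions $p_M(f)\in\eF_M$, the $t$-variable is chained \emph{beyond} level $M$ along the admissible sequence of $T_2$ furnished by the majorizing measure theorem, with a union bound over the $2^{2^M}$ elements of $\eF_M$ interleaved with the levels $m\ge M+1$ (so the cardinalities $2^{2^M}\cdot 2^{2^{m+1}}\le 2^{2^{m+2}}$ stay admissible), yielding $\lesssim L\,\gau{T_2}\lesssim L\ber{T}$; (iii) the joint process on $\eF\times T_2$ is chained up to level $M$ in the product metric $\sqrt n\,d_{\sS}(f_1,f_2)+L\norm{s-t}_2$, yielding $\sqrt n\sum_{m\le M}2^{m/2}e_m(\eF,d_{\sS})+L\,\gau{T_2}$. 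The essential ideas missing from your proposal are the use of the majorizing-measure admissible sequence of $T_2$ so that the $t$-chaining cost appears as $\gau{T_2}\lesssim\ber{T}$ (rather than as $\Delta_2(S)$ times a worst-case ratio), and the cardinality bookkeeping in step (ii) that lets the $\eF$-net of size $2^{2^M}$ ride along the $t$-chain above level $M$. Your closing sentence points in this direction, but as written the proposal's main term is unproven.
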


\begin{remark}
  Note that the radius $R$ may depend on $k$, since we need to make sure that $\sX$ is large enough to include $\sS$.
  A possible choice of $R$ is given as follows.
  Since we assume $\ber{T}< \infty$, we can find an $R' < \infty$ such that
  $\Delta_2(T)\le R'$. Then we can take $R=\sqrt{k} R'$ due to 
  the fact that $\norm{S}_{2,\infty} \le \sqrt{k} \norm{S}_{\infty,\infty} 
 \le \sqrt{k} \norm{T}_{\infty,\infty} \le \sqrt{k} \norm{T}_{2,\infty}$. This will introduce an extra $k$-dependent factor for general $\eF$ as in (\ref{crude_est}), but it is not necessarily the case if we have
 finer estimates on the entropy numbers. It is also worth mentioning
 that if $\eB_2^{k}(R')$ contains a $k$-dimensional cube which contains $T$, we can simply take
 $R=R'$.
\end{remark}

The remainder of the paper is organized as follows: \Cref{sec:prelim} introduces some preliminaries on chaining and
its application to bounding expected suprema of Gaussian and Bernoulli processes; \Cref{sec:proofs}
gives detailed proofs of our main results; \Cref{sec:examples} gives some examples on how the main results
can be applied; proofs of some intermediate technical results are collected in \Cref{app:aux}.

\section{Preliminaries}
\label{sec:prelim}

Let $I \subset \Natural$ and $T \subset \ell^2(I)$. 
We first gather some useful facts on $\ber{T}$ and $\gau{T}$:
\begin{lemma}[]
  For any $T \subset \ell^2(I)$, we have $\ber{T} \le \sup_{t \in T} \norm{t}_1$.
\end{lemma}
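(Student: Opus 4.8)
The plan is to obtain the bound \emph{pointwise} in the Bernoulli signs and only then take expectations. Fix any realization of the sequence $(\varepsilon_i)_{i \in I}$. For each fixed $t \in T$, since $|\varepsilon_i| = 1$ for every $i$, the triangle inequality gives
\begin{align*}
  \sum_{i \in I} \varepsilon_i t_i \le \sum_{i \in I} |\varepsilon_i|\,|t_i| = \sum_{i \in I} |t_i| = \norm{t}_1 \le \sup_{t' \in T} \norm{t'}_1 .
\end{align*}
If $\sup_{t' \in T} \norm{t'}_1 = \infty$ there is nothing to prove, so we may assume this quantity is finite; in particular each series $\sum_{i \in I} \varepsilon_i t_i$ then converges absolutely, so the displayed manipulation is legitimate.

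Since the resulting bound $\sum_{i \in I} \varepsilon_i t_i \le \sup_{t' \in T} \norm{t'}_1$ holds for every $t \in T$, taking the supremum over $t \in T$ gives $\sup_{t \in T} \sum_{i \in I} \varepsilon_i t_i \le \sup_{t' \in T} \norm{t'}_1$, and this inequality holds for every realization of $(\varepsilon_i)$. Taking expectations on both sides and recalling that $\ber{T} = \expect{\sup_{t \in T} \sum_{i \in I} \varepsilon_i t_i}$ yields $\ber{T} \le \sup_{t \in T} \norm{t}_1$, which is the claim.

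There is essentially no obstacle in this argument; the only point that needs a word of care is the measurability of the supremum appearing inside the expectation, but this is covered by the paper's standing convention of restricting attention to countable (or separable) index sets.
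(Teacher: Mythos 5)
Your proof is correct: the pointwise bound $\sum_{i\in I}\varepsilon_i t_i \le \norm{t}_1$ followed by taking the supremum over $t$ and then the expectation is exactly the standard argument, and the paper itself states this lemma without proof precisely because it is this elementary. Your remarks on the infinite case ($\sup_t \norm{t}_1 = \infty$ being vacuous, absolute convergence otherwise) and on measurability via countability are the right points of care and are handled adequately.
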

\begin{lemma}[\cite{bednorz_boundedness_2014}]
  For any $T \subset \ell^2(I)$, we have $\Delta_2(T) \le 4\ber{T}$ and
  $\ber{T} \le \sqrt{\frac{\pi}{2}} \gau{T}$.
\end{lemma}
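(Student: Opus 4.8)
The plan is to establish the two inequalities independently by elementary arguments. For the Gaussian--Bernoulli comparison $\ber{T}\le\sqrt{\pi/2}\,\gau{T}$, I would exploit the fact that a standard Gaussian variable $\xi_i$ has the same law as $\varepsilon_i|\xi_i|$, with $\varepsilon_i$ an independent Rademacher variable, together with $\expect{|\xi_i|}=\sqrt{2/\pi}$. Conditioning on the sequence $(\varepsilon_i)$ and applying Jensen's inequality in the form ``expected supremum dominates the supremum of expectations'' gives
\[
\gau{T}=\expect{\sup_{t\in T}\sum_{i\in I}\varepsilon_i|\xi_i|\,t_i}\ \ge\ \expect{\sup_{t\in T}\sum_{i\in I}\varepsilon_i\,\expect{|\xi_i|}\,t_i}=\sqrt{\tfrac{2}{\pi}}\,\ber{T},
\]
which is the claim. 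If $\gau{T}=\infty$ there is nothing to prove, and when it is finite the manipulations are legitimate because each series $\sum_{i\in I}\varepsilon_i t_i$ converges almost surely for $t\in\ell^2(I)$ and the supremum is over a countable set.

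For the diameter bound $\Delta_2(T)\le 4\ber{T}$, fix $s,t\in T$ and set $X:=\sum_{i\in I}\varepsilon_i s_i$ and $Y:=\sum_{i\in I}\varepsilon_i t_i$; both are symmetric, hence mean zero. Since $\sup_{u\in T}\sum_{i\in I}\varepsilon_i u_i\ge\max(X,Y)$ pointwise and $\expect{\max(X,Y)}=\tfrac12\expect{X+Y+|X-Y|}=\tfrac12\expect{|X-Y|}$, we obtain $\ber{T}\ge\tfrac12\expect{\big|\sum_{i\in I}\varepsilon_i(s_i-t_i)\big|}$. It then suffices to lower-bound the first absolute moment of a Rademacher sum $Z=\sum_{i\in I}a_i\varepsilon_i$ with $a:=s-t$ in terms of $\norm{a}_2$. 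For this I would combine the fourth-moment identity $\expect{Z^4}=\sum_i a_i^4+3\sum_{i\ne j}a_i^2 a_j^2\le 3(\expect{Z^2})^2$ with the Hölder inequality $\expect{Z^2}=\expect{|Z|^{2/3}|Z|^{4/3}}\le(\expect{|Z|})^{2/3}(\expect{Z^4})^{1/3}$; cubing the latter and substituting the former yields $\expect{Z^2}\le 3(\expect{|Z|})^2$, i.e.\ $\expect{|Z|}\ge\tfrac{1}{\sqrt3}\,\norm{a}_2$. Hence $\ber{T}\ge\tfrac{1}{2\sqrt3}\norm{s-t}_2\ge\tfrac14\norm{s-t}_2$, and taking the supremum over $s,t\in T$ gives $\Delta_2(T)\le 4\ber{T}$.

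No step poses a real obstacle. The only points requiring care are the routine measure-theoretic ones (almost-sure convergence of the series, interchanging expectation with the countable supremum, and the trivial treatment of the case where the right-hand side is infinite) and checking that the crude Khintchine-type constant $1/\sqrt3$ is sufficient to reach the stated constant $4$ --- it is, since $2\sqrt3<4$; the sharp Khintchine constant $1/\sqrt2$ would improve $4$ to $2\sqrt2$, but this is not needed here.
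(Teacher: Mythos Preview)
Your argument is correct on both counts. The Gaussian--Bernoulli comparison via the representation $\xi_i\stackrel{d}{=}\varepsilon_i|\xi_i|$ and Jensen is the standard proof (see, e.g., Ledoux--Talagrand), and your Paley--Zygmund/Khintchine route to the diameter bound is valid: the fourth-moment computation $\expect{Z^4}\le 3(\expect{Z^2})^2$ and the H\"older step with exponents $(3,3/2)$ yield $\expect{|Z|}\ge \tfrac{1}{\sqrt3}\norm{a}_2$, and since $2\sqrt3<4$ the constant $4$ follows.

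There is no comparison to make with the paper's proof, because the paper does not supply one: this lemma is simply quoted from \cite{bednorz_boundedness_2014} as a known preliminary fact. Your write-up is therefore a self-contained justification that the paper omits. One cosmetic remark: when you say ``conditioning on the sequence $(\varepsilon_i)$'', what you actually do is integrate out $(|\xi_i|)$ first while holding $(\varepsilon_i)$ fixed; the phrasing could be sharpened, but the mathematics is fine.
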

\begin{lemma}[Eq.~(4.9) in \cite{ledoux_probability_1991}]\label{ber_dom_gau}
  For any $T \subset \Real^m$, we have $\gau{T} \lesssim \sqrt{\log m} \cdot \ber{T}.$
\end{lemma}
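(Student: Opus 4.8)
The plan is to compare the two processes by conditioning the Gaussian process on the magnitudes of its coefficients — which turns it into a Bernoulli process with coordinatewise rescaled coordinates — and then to strip off the rescaling by the contraction principle. Concretely, I would begin from the elementary representation of a standard Gaussian vector as $\xi_i = \varepsilon_i|\xi_i|$, $1\le i\le m$, where $(\varepsilon_i)_{i\le m}$ is a Rademacher sequence independent of the nonnegative random vector $(|\xi_i|)_{i\le m}$. Consequently
\begin{align*}
  \gau{T} = \expect{\sup_{t\in T}\sum_{i=1}^{m}\varepsilon_i|\xi_i|t_i},
\end{align*}
and since $(\varepsilon_i)$ and $(|\xi_i|)$ are independent, the tower property reduces the task to bounding $\expect{\sup_{t\in T}\sum_{i}\varepsilon_i a_i t_i}$ for an arbitrary fixed vector $a=(a_i)_{i\le m}$ of nonnegative reals, and then integrating that bound against the law of $(|\xi_i|)_{i\le m}$.

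For fixed $a$ with $A:=\max_{i\le m}a_i>0$ (the event $A=0$ being null), set $c_i:=a_i/A\in[0,1]$, so that $\sum_i\varepsilon_i a_i t_i = A\sum_i\varepsilon_i c_i t_i$, and invoke the contraction principle for Bernoulli averages \cite{ledoux_probability_1991} to get
\begin{align*}
  \expect{\sup_{t\in T}\sum_{i=1}^{m}\varepsilon_i a_i t_i} = A\,\expect{\sup_{t\in T}\sum_{i=1}^{m}\varepsilon_i c_i t_i}\le A\,\ber{T}.
\end{align*}
I would include the short self-contained proof of this inequality, since it must hold for an \emph{arbitrary} set $T$: for each $\varepsilon$ and each $t$ the map $c\mapsto\sum_i\varepsilon_i c_i t_i$ is linear, hence $c\mapsto\sup_{t\in T}\sum_i\varepsilon_i c_i t_i$ is convex, and convexity is preserved under $\expect{\cdot}$; a convex function on the cube $[-1,1]^m$ attains its maximum at an extreme point $c\in\{-1,1\}^m$; but for such $c$ the vector $(\varepsilon_i c_i)_{i}$ has the same joint law as $(\varepsilon_i)_i$, so the value at every vertex equals $\ber{T}$. (In particular no assumption such as $0\in T$ is needed.) Integrating the displayed bound over $(|\xi_i|)$ yields $\gau{T}\le\expect{\max_{i\le m}|\xi_i|}\cdot\ber{T}$.

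It remains to apply the standard Gaussian maximal inequality: since $\max_{i\le m}|\xi_i|$ is the maximum of the $2m$ standard Gaussians $\pm\xi_1,\ldots,\pm\xi_m$, we have $\expect{\max_{i\le m}|\xi_i|}\le\sqrt{2\log(2m)}\lesssim\sqrt{\log m}$ for $m\ge 2$, while for $m=1$ the claim is immediate since then $\gau{T}=\sqrt{2/\pi}\,\ber{T}$; and if $\ber{T}=\infty$ there is nothing to prove. Combining the last two estimates gives $\gau{T}\lesssim\sqrt{\log m}\cdot\ber{T}$. The argument is short, and the only mildly delicate point is the use of the contraction principle for a general index set $T$, which is exactly why I would spell out the one-line convexity argument above rather than merely cite it.
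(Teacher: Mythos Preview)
Your argument is correct and is precisely the classical proof: represent $\xi_i=\varepsilon_i|\xi_i|$, apply the contraction principle conditionally on $(|\xi_i|)$, and finish with the Gaussian maximal inequality $\expect{\max_{i\le m}|\xi_i|}\lesssim\sqrt{\log m}$. The paper does not supply its own proof but simply cites Eq.~(4.9) of \cite{ledoux_probability_1991}, where exactly this argument appears; your write-up matches that reference, and the convexity justification you give for contraction without assuming $0\in T$ is the right way to handle an arbitrary index set.
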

\noindent In general, when we use $\ber{T}$ to control $\gau{T}$, the inequality of Lemma~\ref{ber_dom_gau} cannot
be improved. Obviously, we can use Lemma~\ref{ber_dom_gau} with the chain rule 
for $\gau{\eF(T)}$ from \cite{maurer_chain_2014} to obtain an upper bound on $\ber{\eF(T)}$, but that
would introduce an extra logarithmic factor. Thus we need to modify the techniques from
\cite{maurer_chain_2014} to prove our results. Before that, let us introduce
the most important device used in \cite{maurer_chain_2014} and the current work: generic chaining, which
is ubiquitous in the study of suprema of stochastic processes \cite{talagrand_upper_2014}. We start by giving the requisite definitions. 
\begin{definition}[]
  Given a sequence of sets $(B_m)_{m\ge 0}$, if $\abs{B_0}=1$ and $\abs{{B_m}}\le 2^{2^m}$
  for $m\ge 1$,
  we say each $B_m$ has \textit{admissible} cardinality.
\end{definition}
\begin{definition}[]
Given a set $T$, a sequence of partitions $(\cA_m)$ of $T$ is \textit{increasing} if, for any
$m\ge 0$, every set of $\cA_{m+1}$ is contained in a set of $\cA_m$. Then
an \textit{admissible sequence} of $T$ is an increasing sequence $(\cA_m)$ of 
partitions of $T$ such that each $\cA_m$ has admissible cardinality. 
\end{definition}
 \begin{definition}[]
  Given a metric space $(T,d)$, let  
\begin{align*}
  \gamma_2(T,d) :=\inf \sup_{t \in T} \sum_{m\ge 0} 2^{m / 2} \Delta(A_m(t),d),
\end{align*}
where the infimum is taken over all admissible sequences of $(T,d)$, and
$A_m(t)$ is the unique subset of $\cA_m$ containing $t$. %and $\Delta_d(A)=\Delta(A,d)$.
\end{definition}

Now we state a bound on expected suprema of stochastic processes that satisfy a certain increment
condition. Note that it can be recognized as a minor modification of the \textit{generic chaining bound} (see \eg \cite{talagrand_upper_2014}). A similar statement can also be found in \cite[Theorem 3]{maurer_chain_2014}.
\begin{lemma}\label{gen_chain_appr} Let $(T,d)$ be a metric space. Let $(Y_t)_{t \in T}$ be a stochastic process indexed by $T$ that
  satisfies the increment condition: there exists some $\kappa \ge 1$ such that, for all $s,t \in T$ and all $u > 0$,
\begin{align}\label{subgau_cond}
  \prob{Y_s-Y_t> u} \le \kappa \exp\left(-\frac{u^2}{2d(s,t)^2}\right).
\end{align}
Then, for any $\tau \in T$ and for each admissible sequence $(\cA_m)$ of $T$,
\begin{align}
  \expect{\sup_{t \in T} Y_t - Y_{\tau}} \lesssim  
  \sup_{t \in T} \sum_{m=0}^{\infty} 2 ^{m / 2} \Delta(A_m(t),d) + 
 \Delta(T,d) \sqrt{\log \kappa} .
\end{align}
\end{lemma}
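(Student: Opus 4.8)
The plan is to adapt the standard generic chaining argument, keeping careful track of how the constant $\kappa$ in the increment condition \eqref{subgau_cond} propagates. Fix an admissible sequence $(\cA_m)$ of $(T,d)$ and a base point $\tau \in T$. For each $m \ge 0$, choose one representative point $\pi_m(t) \in A_m(t)$ for every $t$, in such a way that $\pi_m$ is constant on each set of $\cA_m$; take $\pi_0(t) = \tau$ for all $t$ (legitimate since $|\cA_0| = 1$). The chaining identity is then the telescoping sum $Y_t - Y_\tau = \sum_{m \ge 1} (Y_{\pi_m(t)} - Y_{\pi_{m-1}(t)})$, valid $T$-pointwise provided the series converges, which will follow a posteriori from the bound $d(\pi_m(t), \pi_{m-1}(t)) \le \Delta(A_{m-1}(t), d)$ together with $\sum_m 2^{m/2} \Delta(A_m(t), d) < \infty$ along any admissible sequence approaching the infimum.

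First I would estimate, for a fixed $m \ge 1$, the expected supremum over all the increments at level $m$. There are at most $|\cA_m| \cdot |\cA_{m-1}| \le 2^{2^m} \cdot 2^{2^{m-1}} \le 2^{2^{m+1}}$ distinct pairs $(\pi_m(t), \pi_{m-1}(t))$, and each increment $Y_{\pi_m(t)} - Y_{\pi_{m-1}(t)}$ has the subgaussian tail \eqref{subgau_cond} with parameter $d(\pi_m(t), \pi_{m-1}(t)) \le \Delta(A_{m-1}(t), d) =: \delta_{m-1}(t)$. A union bound over these pairs, integrated over $u > 0$, gives
\begin{align*}
  \expect{\sup_{t \in T} \left( Y_{\pi_m(t)} - Y_{\pi_{m-1}(t)} \right)}
  \lesssim \sup_{t \in T} \delta_{m-1}(t) \left( \sqrt{2^{m+1} \log 2} + \sqrt{2 \log \kappa} \right)
  \lesssim 2^{m/2} \sup_{t \in T} \delta_{m-1}(t) + \sqrt{\log \kappa}\, \sup_{t \in T} \delta_{m-1}(t);
\end{align*}
here the $\sqrt{\log \kappa}$ term arises precisely because the tail bound \eqref{subgau_cond} carries the prefactor $\kappa$ rather than $1$, so $\prob{\max_{\text{pairs}} (\cdots) > u}$ picks up an additive $\log\kappa$ inside the exponent's threshold. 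Summing over $m \ge 1$, the first family of terms yields $\sum_{m \ge 1} 2^{m/2} \sup_t \delta_{m-1}(t) \lesssim \sup_t \sum_{m \ge 0} 2^{m/2} \Delta(A_m(t), d)$ after reindexing, which is the generic-chaining term. For the $\sqrt{\log\kappa}$ family, I would \emph{not} sum naively (that would diverge); instead, following the standard trick, I truncate the chain: for a suitable cutoff one replaces the per-level $\sqrt{\log\kappa}$ increments by a single application of \eqref{subgau_cond} to the increment $Y_t - Y_{\pi_{m_0}(t)}$ over the coarsest scale where $\log\kappa$ dominates $2^{m}$, producing a single contribution of order $\Delta(T,d)\sqrt{\log\kappa}$; alternatively, and more cleanly, observe that once $2^{m} \ge \log\kappa$ the $\kappa$-prefactor is harmless (it is absorbed into the $2^{m/2}$ term), and for the finitely many levels $m$ with $2^m < \log\kappa$ one bounds each increment crudely by $\Delta(T,d)$ times $\sqrt{\log\kappa}$ up to a constant and notes there are $O(\log\log\kappa)$ such levels — but in fact the cleanest route is to split $Y_t - Y_\tau = (Y_t - Y_{\pi_{m_0}(t)}) + (Y_{\pi_{m_0}(t)} - Y_\tau)$ with $m_0 := \min\{m : 2^m \ge \log\kappa\}$, bound the first piece by pure chaining from level $m_0$ on (no $\kappa$ cost since $2^m \ge \log\kappa$ throughout), and bound the second piece by a single union bound over at most $2^{2^{m_0+1}} \le \kappa^{O(1)}$ points, each increment having $d \le \Delta(T,d)$, giving $\lesssim \Delta(T,d)\sqrt{2^{m_0}} \lesssim \Delta(T,d)\sqrt{\log\kappa}$.

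Finally, I would assemble the two contributions:
\begin{align*}
  \expect{\sup_{t \in T} Y_t - Y_\tau}
  \lesssim \sup_{t \in T} \sum_{m \ge 0} 2^{m/2} \Delta(A_m(t), d) + \Delta(T,d)\sqrt{\log\kappa},
\end{align*}
which is exactly the claimed bound. The main obstacle — and the only place the argument genuinely departs from the textbook generic chaining bound — is the bookkeeping around the $\kappa$-prefactor: one must avoid paying $\sqrt{\log\kappa}$ at every scale (which would blow up the bound by a $\log\log\kappa$ or worse factor, or diverge) by carefully choosing the truncation level $m_0 \asymp \log_2\log\kappa$ so that the $\kappa$ cost is incurred exactly once, against the full diameter $\Delta(T,d)$, while the remaining fine-scale increments are controlled by the $\kappa$-free chaining sum. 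Everything else — the union bound over admissible-cardinality families, the Gaussian-tail integration $\expect{\max_i Z_i} \lesssim \sqrt{\log N}\max_i \sigma_i$, the telescoping, and the reindexing $\delta_{m-1} \leftrightarrow \delta_m$ — is routine.
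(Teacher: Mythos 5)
There is a genuine gap in the core chaining step. You bound, for each fixed level $m$, the quantity $\expect{\sup_{t}(Y_{\pi_m(t)}-Y_{\pi_{m-1}(t)})}$ by $2^{m/2}\sup_t\delta_{m-1}(t)+\sqrt{\log\kappa}\,\sup_t\delta_{m-1}(t)$ and then sum over $m$. This produces $\sum_{m}2^{m/2}\sup_{t}\Delta(A_{m-1}(t),d)$, and your claim that this is $\lesssim\sup_{t}\sum_{m}2^{m/2}\Delta(A_m(t),d)$ "after reindexing" is false: the inequality between $\sum_m\sup_t$ and $\sup_t\sum_m$ goes the other way, and the gap between the two can be a genuine multiplicative factor (this is exactly the gap between Dudley's entropy bound and the $\gamma_2$ functional, e.g.\ for ellipsoids). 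Since the lemma is applied in the paper together with the majorizing measure theorem, which controls only $\sup_t\sum_m 2^{m/2}\Delta_2(A_m(t))$, the $\sup_t\sum_m$ form is essential and cannot be replaced by $\sum_m\sup_t$. To get it, you must not take expectations level by level; instead, perform a single union bound over all levels $m$ and all pairs $(\pi_m(t),\pi_{m-1}(t))$ of the events $\{Y_{\pi_m(t)}-Y_{\pi_{m-1}(t)}>u\,2^{m/2}d(\pi_m(t),\pi_{m-1}(t))+\zeta_m(t)\}$, with thresholds proportional to the \emph{local} distances, so that on the complement the telescoping sum is bounded pointwise in $t$ by $u\sum_m 2^{m/2}d(\pi_m(t),\pi_{m-1}(t))$; this is what the paper's proof (via its Proposition on converting the resulting tail bound into a moment bound) does.

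On the $\kappa$ bookkeeping: your truncation at $m_0\asymp\log_2\log\kappa$, paying $\Delta(T,d)\sqrt{\log\kappa}$ once on the coarse block and absorbing $\kappa$ into the $2^{m}$ cardinality term for $m\ge m_0$, is a legitimate alternative and would work if grafted onto the correct joint union bound. The paper instead shifts every threshold by $\sqrt{2\log\kappa}\,d(\pi_m(t),\pi_{m-1}(t))$, which cancels the prefactor $\kappa$ exactly at each level, and then controls the total shift $\sqrt{2\log\kappa}\,\sup_t\sum_m d(\pi_m(t),\pi_{m-1}(t))$ by $\Delta(T,d)\sqrt{\log\kappa}$; both routes incur the $\kappa$ cost only against the diameter. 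But this choice is secondary: the proposal as written does not establish the stated bound because of the sum/sup exchange.
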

\noindent In particular, if we take $T=\ell^2(I)$, $d=d_2$, and $Y \colon t \mapsto \sum_{i \in I} \xi_i t_i$,
we can apply the result above with $\kappa=1$ and take the infimum on both sides over all
 admissible sequences to obtain
\begin{align}\label{gen_chain_gau}
  \gau{T}\lesssim \gamma_2(T,d_2).
\end{align}
It turns out that (\ref{gen_chain_gau}) can be reversed by the \textit{majorizing measure theorem} \cite{talagrand_upper_2014}:
\begin{theorem}\label{major_meas}
  There exists an admissible sequence $\cA_m$ of $T$ such that
\begin{align*}  
  \sup_{t \in T} \sum_{m\ge 0} 2^{m / 2} \Delta_2(A_m(t)) \lesssim \gau{T} .
\end{align*}
\end{theorem}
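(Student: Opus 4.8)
\emph{Proof strategy.} This is the lower-bound direction of Talagrand's majorizing measure theorem (building on Fernique), so the plan is to follow the standard route through \emph{growth functionals} and \emph{partitioning schemes}; one may assume $\gau{T}<\infty$, the only nontrivial case. Since $\gamma_2(T,d_2)$ is defined as an infimum over admissible sequences, it suffices to produce a single admissible sequence realizing that infimum up to a universal factor, so the whole content is the inequality $\gamma_2(T,d_2)\lesssim\gau{T}$. I would obtain this from two ingredients: first, a probabilistic \emph{growth condition} for the set functional $F(A):=\gau{A}$ (which is nonnegative, monotone under inclusion, and translation invariant); and second, a purely combinatorial theorem asserting that any such functional obeying a growth condition generates an admissible sequence whose chaining functional is bounded by $F(T)+\Delta_2(T)$. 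Granting the second ingredient the theorem is immediate, because $\Delta_2(T)\lesssim\gau{T}$ has already been recorded above, hence $\gamma_2(T,d_2)\lesssim F(T)+\Delta_2(T)\lesssim\gau{T}$.

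For the first ingredient I would prove: there exist universal constants $L\ge 1$ and $r\ge 4$ so that, for every $m\ge1$, every $a>0$, every family $t_1,\dots,t_N$ with $N=2^{2^m}$ and $\norm{t_\ell-t_{\ell'}}_2\ge a$ for $\ell\neq\ell'$, and every choice of sets $H_\ell\subseteq\set{t:\norm{t-t_\ell}_2\le a/r}$, setting $H:=\bigcup_{\ell=1}^{N}H_\ell$ one has
\begin{equation*}
  \gau{H}\ \ge\ \frac{a\,2^{m/2}}{L}\ +\ \min_{1\le\ell\le N}\gau{H_\ell}.
\end{equation*}
Writing $X_t=\sum_i\xi_it_i$, the two competing effects are that the $a$-separation of the $X_{t_\ell}$ in $L^2$ forces $\expect{\max_\ell X_{t_\ell}}\gtrsim a\sqrt{\log N}\asymp a\,2^{m/2}$ by Sudakov's minoration, whereas for each $\ell$ the variable $\sup_{t\in H_\ell}(X_t-X_{t_\ell})$ has mean $\gau{H_\ell}$ and, by Gaussian concentration (the Borell--TIS inequality), fluctuates only on the scale $a/r$. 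Starting from the pointwise bound $\sup_{t\in H}X_t\ge\max_\ell\bigl(X_{t_\ell}+\sup_{t\in H_\ell}(X_t-X_{t_\ell})\bigr)$, a conditioning argument decoupling the identity of the maximizing index from the within-ball increments — together with $r$ chosen large enough that the $a/r$-scale fluctuations are dominated by the $a\,2^{m/2}$ Sudakov gain — yields the display. The degenerate case $N=2$ of this reasoning is exactly the inequality $\Delta_2(T)\lesssim\gau{T}$ used above.

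For the second ingredient I would run the greedy partitioning scheme. One constructs $(\mathcal{A}_m)$ recursively, attaching to each cell $C\in\mathcal{A}_m$ an integer label $j(C)$ with $\Delta_2(C)\lesssim 2^{-j(C)}$ and with children's labels nondecreasing. To split a cell $C$ one peels off $d_2$-balls $B(t_1,\rho)\cap C,\,B(t_2,\rho)\cap C,\dots$ with $\rho\asymp 2^{-j(C)}/r$, at each stage choosing the new center $t_\ell$ inside the not-yet-removed part of $C$ so as to maximize the functional captured by its ball. The chosen centers are then $r\rho$-separated, so the growth condition applies to them; its contrapositive shows that at most $2^{2^{m+1}}-1$ of the peeled pieces can retain an $F$-value within $\asymp 2^{m/2}2^{-j(C)}$ of $F(C)$, and these pieces, together with a single residual cell gathering everything not yet removed, are declared the children of $C$ — all with a strictly larger label, except possibly the residual one, for which a suitably modified functional value has nonetheless dropped by $\gtrsim 2^{m/2}2^{-j(C)}$. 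The accounting is then a potential argument: along any decreasing chain $A_0(t)\supseteq A_1(t)\supseteq\cdots$, each step in which the label does not increase is a step in which $F$ has dropped by $\gtrsim 2^{m/2}\Delta_2(A_m(t))$, so, $F$ starting at $\gau{T}$ and remaining nonnegative, the total of all such diameter contributions is $\lesssim\gau{T}$; the remaining steps, where the label strictly increases, contribute a convergent geometric series bounded by $\lesssim\Delta_2(T)$. Hence $\sum_{m\ge0}2^{m/2}\Delta_2(A_m(t))\lesssim\gau{T}+\Delta_2(T)\lesssim\gau{T}$ uniformly in $t$, as required.

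The main obstacle is, without question, the partitioning scheme: pinning down the labelling rule, the peeling order, and the handling of the residual cell so that admissibility ($\abs{\mathcal{A}_m}\le 2^{2^m}$) is preserved at every level \emph{while} the two-regime decomposition of chains still telescopes against $\gau{T}+\Delta_2(T)$ uniformly over $t\in T$. The growth condition is where the Gaussian structure genuinely enters, but it is essentially Sudakov's minoration reinforced by concentration; the one delicate point there is keeping the conditioning argument clean enough that the within-ball fluctuations never overwhelm the Sudakov gain, which is precisely what enlarging $r$ buys.
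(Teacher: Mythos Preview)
The paper does not prove this theorem at all: it is quoted as the \emph{majorizing measure theorem} with a citation to \cite{talagrand_upper_2014}, and is used only as a black box in the proofs of Theorems~\ref{main_res_a} and~\ref{main_res_b}. Your outline is the standard route found in that reference---growth condition for the functional $A\mapsto\gau{A}$ via Sudakov minoration plus Borell--TIS concentration, combined with the greedy partitioning scheme and the two-regime telescoping argument---so there is nothing to compare against in the paper itself, and your sketch is consistent with the cited proof.
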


Turning now to Bernoulli averages, let us define
\begin{align*}%\label{ber_conj}
  \mathfrak{b}^*(T) := \inf \set{\sup_{t \in T_1} \norm{t}_1 + \gamma_2(T_2,d_2): T \subset T_1 + T_2  },
\end{align*}
where $T_1+ T_2$ is the Minkowski sum, \ie, $T_1 + T_2 = \set{ t_1+ t_2: t_1\in T_1, t_2\in T_2 }$. 
 Since $\ber{T} \le \ber{T_1+T_2}\le \ber{T_1}+\ber{T_2}\le\sup_{t \in T_1} \norm{t_1} + c\gamma_2(T_2,d_2)  $ for a universal constant $c$,
 we obtain $\ber{T} \lesssim \mathfrak{b}^*(T)$.
Whether such an estimate can be reversed, known as Talagrand's Bernoulli Conjecture,
remained an open problem for over two decades and has been
affirmatively answered by the following theorem 
(see \eg \cite[Theorem 1.1]{bednorz_boundedness_2014}).
\begin{theorem}[Bednorz-Lata\l{}a]\label{bed_lata}
For $\ber{T} < \infty$, there exists a decomposition $T \subset T_1 + T_2$ with
\begin{align*}
  \sup_{t \in T_1} \sum_{i \in I} \abs{t_i} \lesssim \ber{T} \text{ and } \gau{T_2} \lesssim \ber{T}.
\end{align*}
\end{theorem}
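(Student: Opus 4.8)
\textbf{Proof strategy for Theorem~\ref{main_res_b}.}

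The plan is to invoke Theorem~\ref{main_res_a} and then bound the ``complexity'' term $\Delta_2(S)\comr{\eF,S} + \ber{\eF(\tau)}$ by a chaining argument over $\eF$ with respect to the metric $d_{\sS}$. First I would apply Theorem~\ref{main_res_a} to obtain a set $S$ with $\gau{S}\le\ber{T}$ (hence $\Delta_2(S)\le\sqrt{2\pi}\,\ber{T}$) and $\norm{S}_{\infty,\infty}\le\norm{T}_{\infty,\infty}$, together with the bound $\ber{\eF(T)}\lesssim L\ber{T} + \Delta_2(S)\comr{\eF,S} + \ber{\eF(\tau)}$ for any $\tau\in S$. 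The job then reduces to controlling $\Delta_2(S)\comr{\eF,S}$ and $\ber{\eF(\tau)}$ purely in terms of entropy numbers of $\eF$ relative to $\sS=\bigcup_i S_i$. The key point is that both quantities live on the finite-dimensional set $\eF(\tau)\subset\Real^n$ (resp.\ on pairwise differences indexed by $\eF$), so we can bound Bernoulli suprema by the crude estimate $\ber{U}\le\sup_{u\in U}\norm{u}_1$ when that is advantageous, or by Gaussian chaining otherwise, and optimize the split between the two at a cutoff scale $M$.

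Next I would estimate $\comr{\eF,S}$. For fixed $s,t\in S$, the Bernoulli average $\expect{\sup_{f\in\eF}\sum_{i=1}^n\varepsilon_i(f(s_i)-f(t_i))}$ is a Bernoulli process indexed by $\eF$; its natural metric is $\rho(f,g)^2=\sum_{i=1}^n\big((f(s_i)-g(s_i))-(f(t_i)-g(t_i))\big)^2$, which is dominated (up to a factor) by $\norm{s-t}_2^2\cdot d_{\sS}(f,g)^2$ after one uses Lipschitzness of $f-g$ — but more directly, since each coordinate difference $f(s_i)-f(t_i)$ has absolute value at most $\min\{L\norm{s_i-t_i}_2,\,2\norm{f}_{\sS}\}$ (and, crucially, at most $d_{\sS}(f,g)$-type increments under differences), one can run the generic-chaining bound of Lemma~\ref{gen_chain_appr} for the Bernoulli process over $\eF$ using the $\sqrt{n}$-scaled sup-norm metric. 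Carrying out the truncated chaining at level $M$: the ``tail'' beyond $M$ contributes the diameter term $e_M(\eF,d_{\sS})$ (scaled appropriately by $n$), while the links $m=0,\dots,M$ contribute $\sum_{m=0}^M 2^{m/2}e_m(\eF,d_{\sS})$. Dividing by $\norm{s-t}_2$ and multiplying back by $\Delta_2(S)$ produces the factor of $n$ together with the normalization $2^{m/2}/\sqrt n$, after one accounts for the fact that $\norm{s-t}_2\le\Delta_2(S)$ so the ratio structure is harmless; the same chaining also bounds $\ber{\eF(\tau)}\le\sup_{f\in\eF}\sum_{i=1}^n|f(\tau_i)|$, which is absorbed into the $e_0$ (diameter) contribution. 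Taking the infimum over $M\in\Natural$ gives the displayed bound.

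For the ``in particular'' part, suppose $\sX=\eB_2^k(R)\supset\sS$ and $\norm{f}_{\sX}\le LR$ for all $f\in\eF$. Then $\eF$ is a uniformly bounded, uniformly Lipschitz class on a $k$-dimensional Euclidean ball, so its entropy numbers in $d_{\sS}\le d_{\sX}$ satisfy the classical volumetric bound $e_m(\eF,d_{\sX})\lesssim_k LR\,2^{-2^m/k}$ for $2^m\gtrsim k$ (covering-number estimates for Lipschitz functions on a compact $k$-dimensional body, e.g.\ via \cite{talagrand_upper_2014}), with $e_0(\eF,d_{\sX})\lesssim LR$. Plugging this into $\inf_M\{e_M + n^{-1/2}\sum_{m\le M}2^{m/2}e_m\}$ and optimizing $M$ is a routine computation: the sum $\sum_m 2^{m/2}2^{-2^m/k}$ behaves like a geometric-type series whose partial sums, balanced against the tail $e_M$, yield $r_{n,k}/n$ equal to $n^{-1/2}$ for $k=1$, $n^{-1/2}\log n$ for $k=2$, and $n^{-1/k}$ for $k>2$; multiplying by the leading $n$ gives $r_{n,k}$, and the $L\ber T$ term is carried over untouched.

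The main obstacle I anticipate is the second paragraph: setting up the Bernoulli chaining for $\comr{\eF,S}$ so that the $\norm{s-t}_2$ in the denominator is handled cleanly and the resulting bound is genuinely in terms of $e_m(\eF,d_{\sS})$ rather than some mixed $(s,t)$-dependent metric. One has to verify that the increment of the process $f\mapsto\sum_i\varepsilon_i(f(s_i)-f(t_i))$ between $f$ and $g$ is subgaussian with variance proxy comparable to $\norm{s-t}_2^2\,d_{\sS}(f,g)^2$ — this follows from Hoeffding's inequality once each term is bounded by $|f(s_i)-g(s_i)-f(t_i)+g(t_i)|\le d_{\sS}(f,g)+d_{\sS}(f,g)$ combined with a telescoping/interpolation estimate tying it to $\norm{s_i-t_i}_2$ — and then that truncating the chain at $M$ and using $e_M(\eF,d_{\sS})$ for the diameter of the remaining cells legitimately reproduces the stated two-term infimum. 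The factor-of-$n$ bookkeeping (each of the $n$ coordinates contributing a term of order $e_m$, versus the $\sqrt n$ from the chaining normalization) is the other place where care is needed, but it is mechanical once the metric comparison is in hand.
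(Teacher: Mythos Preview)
First, a mismatch: the statement you were asked about is the Bednorz--Lata{\l}a theorem (Theorem~\ref{bed_lata}), which the paper merely cites and does not prove; your proposal is instead a strategy for Theorem~\ref{main_res_b}. I review it as such.

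Your route through Theorem~\ref{main_res_a} is different from the paper's, and the difference is essential. The paper does \emph{not} try to bound $\Delta_2(S)\,\comr{\eF,S}$ by entropy numbers; it returns to Proposition~\ref{decomp_comp} and bounds $\ber{\eF(T_2)}$ directly by a \emph{joint} chaining over $\eF\times T_2$, using the additive metric $d_3((f_1,s),(f_2,t))=\sqrt{n}\,d_{\sS}(f_1,f_2)+L\norm{s-t}_2$ together with the majorizing measure theorem on $T_2$. This splits cleanly into an $\eF$-contribution $\sqrt{n}\sum_{m\le M}2^{m/2}e_m(\eF,d_{\sS})$ and a $T_2$-contribution $L\gau{T_2}\lesssim L\ber{T}$, with a separate Cauchy--Schwarz argument giving the tail term $n\,e_M(\eF,d_{\sS})$.

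Your approach has a genuine gap precisely where you anticipate trouble: the increment of $f\mapsto\sum_i\varepsilon_i(f(s_i)-f(t_i))$ between $f$ and $g$ is \emph{not} subgaussian with variance proxy comparable to $\norm{s-t}_2^2\,d_{\sS}(f,g)^2$. With $h=f-g$ one only has $|h(s_i)-h(t_i)|\le 2d_{\sS}(f,g)$ and $|h(s_i)-h(t_i)|\le 2L\norm{s_i-t_i}_2$, never the product form. For a concrete failure take $k=1$, $h(x)=N^{-1}\sin(Nx)$ (so $d_{\sS}(f,g)\le 1/N$ and $h$ is $1$-Lipschitz), and $s_i-t_i=\pi/N$: then $|h(s_i)-h(t_i)|$ can equal $2/N$ while $d_{\sS}(f,g)\cdot|s_i-t_i|\le\pi/N^2$, so the ratio is unbounded in $N$. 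Hence any chaining over $\eF$ alone produces a bound on the numerator of $\comr{\eF,S}$ that does not scale with $\norm{s-t}_2$, and the supremum over $s\neq t$ diverges as $s\to t$. This is exactly why the paper chains simultaneously in $\eF$ and $T_2$ rather than passing through $\comr{\eF,S}$. A smaller issue: for $L$-Lipschitz functions on a $k$-dimensional ball the correct estimate is $e_m(\eF,d_{\sX})\le C_kLR\,2^{-m/k}$ (Lemma~\ref{ent_num_est}), not $2^{-2^m/k}$; with the latter decay the sum $\sum_m 2^{m/2}e_m$ would converge for every $k$ and the claimed phase transition at $k=2$ would disappear.
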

\begin{remark}
The main difficulty when working with Bernoulli processes
stems from the fact that $\ber{T}$ can be controlled both by $\sup_{t \in T}\norm{t}_1$ and $\gau{T}$, which
are of completely distinct nature, and there is no  canonical decomposition of $T$ into a Minkowski sum $T_1 + T_2$. An
explicit construction of $T_1,T_2$ based on admissble sequences tailored to $\ber{T}$ can be
found in \cite[Theorem 3.1]{bednorz_boundedness_2014}. As we show  next, 
Theorem~\ref{bed_lata} is the
key ingredient needed to extend Maurer's chain rule to Bernoulli case.
\end{remark}

\section{Proofs of Main Results} 
\label{sec:proofs}

\sloppypar The main idea 
behind the proof of \cite[Theorem 2]{maurer_chain_2014} is to consider $t \mapsto \sup_{f \in \eF} \sum_{i \in I} \xi_i f(t_i)$ as
a stochastic process indexed by $T$. Upon verifying that this process satisfies a subgaussian
increment condition \eqref{subgau_cond}, one can apply the chaining argument. However, since chaining for
Gaussian processes does not
suffice to `explain' Bernoulli processes, we
first invoke the decomposition in the Bednorz--Lata\l{}a theorem and then
apply chaining only to the parts that are better explained by it.  With the decomposition in \Cref{bed_lata}, we have
the following crucial observation.
\begin{proposition}[]\label{decomp_comp}
 For $T\subset \Real^{k \times n}$, there exist $T_1, T_2 \subset \Real^{k\times n}$ 
 with $T \subset$ $T_1 +T_2$, such that
 \begin{align*}
   \ber{\eF(T)} \lesssim L \sup_{t \in T_1}\norm{t}_{1,1} 
   +  \ber{\eF(T_2)}\lesssim L\ber{T} +  \ber{\eF(T_2)},
 \end{align*}
 where $T_2$ satisfies $\gau{T_2}\lesssim \ber{T}$.
\end{proposition}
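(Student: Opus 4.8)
The plan is to apply Theorem~\ref{bed_lata} to the set $T \subset \Real^{k \times n}$, viewed as a subset of $\ell^2(I)$ with $I = \{1,\dots,k\} \times \{1,\dots,n\}$, obtaining a decomposition $T \subset T_1 + T_2$ with $\sup_{t \in T_1}\norm{t}_{1,1} \lesssim \ber{T}$ (note $\norm{t}_{1,1} = \sum_{i \in I}\abs{t_i}$ in this identification) and $\gau{T_2} \lesssim \ber{T}$. Since $\ber{T} \ge \frac14 \Delta_2(T)$ by Lemma~2, finiteness of $\ber{T}$ is guaranteed by hypothesis, so the decomposition exists. This already gives the rightmost inequality $L\sup_{t \in T_1}\norm{t}_{1,1} \lesssim L\ber{T}$ and the stated property of $T_2$; the work is in the middle inequality.

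For the middle inequality, I would first decompose each $t \in T$ as $t = t^{(1)} + t^{(2)}$ with $t^{(1)} \in T_1$, $t^{(2)} \in T_2$ (fixing one such splitting for each $t$), and write, using uniform Lipschitzness of $\eF$ in the Euclidean norm together with the fact that $f(t_i) = f(t^{(1)}_i + t^{(2)}_i)$,
\begin{align*}
  f(t_i) = \bigl(f(t^{(1)}_i + t^{(2)}_i) - f(t^{(2)}_i)\bigr) + f(t^{(2)}_i).
\end{align*}
Summing against the Rademacher variables and taking suprema, the supremum over $\eF(T)$ splits (via subadditivity of $\sup$ and of expectation) into a term involving the increments $f(\cdot + t^{(2)}_i) - f(\cdot)$ — which, since $f$ is $L$-Lipschitz, are bounded pointwise by $L\norm{t^{(1)}_i}_2$ — and the term $\ber{\eF(T_2)}$. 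The first term should be controlled by the comparison principle for Rademacher processes (contraction): one bounds the Bernoulli average of the $L$-Lipschitz increments by $L$ times the Bernoulli average of the linear functions $\sum_{i,j}\varepsilon_{ji} t^{(1)}_{ji}$ over $T_1$, hence by $L \ber{T_1} \le L \sup_{t \in T_1}\norm{t}_{1,1}$ using Lemma~1. The precise bookkeeping here — making sure the increment process indexed by $T_1$ is handled by a vector-valued contraction inequality, and that the additive split respects the pairing $t \leftrightarrow (t^{(1)}, t^{(2)})$ — is the step I expect to be the main obstacle, since $\eF$ maps into $\Real^\ell$ and one must be careful that the ``contraction'' is applied coordinate-wise in the right way and that the supremum over the paired decomposition dominates the supremum over $\eF(T)$.

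An alternative, cleaner route for that same step is to observe directly that $\eF(T) \subset \eF_1 + \eF(T_2)$ for a suitable set $\eF_1$ of ``increment images'' and then bound $\ber{\eF_1}$; but in either formulation the essential inequality needed is $\ber{\{(f(s_i + \delta_i) - f(s_i))_i : f \in \eF\}} \lesssim L \sup\norm{\delta}_{1,1}$, which follows from Lemma~1 (every point of this set has $\ell^1$ norm at most $L\sum_i\norm{\delta_i}_2 \le L\sum_i\sum_j\abs{\delta_{ji}} = L\norm{\delta}_{1,1}$, using $\norm{\delta_i}_2 \le \norm{\delta_i}_1$). Finally, I would assemble the two displayed inequalities: the middle one from the Lipschitz/$\ell^1$-bound argument just described, and the right one from Theorem~\ref{bed_lata}, recording that the same $T_2$ produced by Bednorz--Lata\l a satisfies $\gau{T_2} \lesssim \ber{T}$, which is exactly what is needed downstream in the proof of Theorem~\ref{main_res_a}.
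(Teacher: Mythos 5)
Your proposal is correct and follows essentially the same route as the paper: apply Bednorz--Lata{\l}a to get a pointwise splitting $t = (t-\pi(t)) + \pi(t)$ with $\pi(t)\in T_2$, bound the increment term $\sum_i \varepsilon_i\bigl(f(t_i)-f(\pi(t)_i)\bigr)$ trivially by $\sum_i\abs{f(t_i)-f(\pi(t)_i)}\le L\sum_i\norm{t_i-\pi(t)_i}_2\le L\norm{t-\pi(t)}_{1,1}$, and recognize the remaining term as $\ber{\eF(T_2)}$. The vector-valued contraction machinery you flag as the ``main obstacle'' is not needed at all --- the ``alternative, cleaner route'' in your last paragraph (the trivial $\ell^1$ bound of Lemma~1 applied to the set of increment images) is exactly what the paper does, and since $\eF$ is real-valued in Theorem~\ref{main_res_a} there is no coordinate-wise bookkeeping issue.
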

  
\begin{proof}
  For $T \subset \Real^{k\times n}$, we identify $T$ as $\overline{T} \subset \Real^{kn}$ by vectorization, \ie,
  for any $t \in T$, let
  \begin{align*}
    \mathsf{vec}: t =(t_1, \ldots, t_n)= \left( 
               \begin{pmatrix}
                t_{11}\\
                t_{12}\\
                \vdots\\
                t_{1k}
               \end{pmatrix}, \ldots, 
               \begin{pmatrix}
                t_{n1}\\
                t_{n2}\\
                \vdots\\
                t_{nk}
               \end{pmatrix}\right) \mapsto ((t_{11},\ldots,t_{1k}),\ldots, (t_{n1},\ldots, t_{nk})) \in \Real^{kn},
  \end{align*}
  and let $\mathsf{unvec}$ be its inverse. Thus, $\overline{T} = \mathsf{vec}(T)$, and we will use  $\overline{t}$ for elements of $\overline{T}$.  We construct $\overline{T}_1, \overline{T}_2$ as in \cite[Theorem 3.1]{bednorz_boundedness_2014} so that 
  \Cref{bed_lata} applies. In particular, as detailed in the proof of Theorem 1.1 of \cite{bednorz_boundedness_2014}, there exists a map $\pi : \Real^{kn} \to \Real^{kn}$, such that
 \begin{align*}%\label{T_12}
    \overline{T}_1 = \set{\overline{t}- \pi(\overline{t}): \overline{t}\in \overline{T}} 
    \quad\mbox{  and  }\quad \overline{T}_2 = \set{\pi(\overline{t}): \overline{t} \in \overline{T}},
  \end{align*}
  which are identified as $T_1, T_2$ in $\Real^{k \times n}$ by letting $T_1= \mathsf{unvec}(\overline{T}_1),T_2= \mathsf{unvec}(\overline{T}_2) $. 
With a slight abuse of notation, we also write $\pi(t)$ for $t \in T$
   by letting $\pi(t)=\mathsf{unvec}(\pi(\mathsf{vec}(t)))$.
  Then, we obtain
  \begin{align*}
    \ber{\eF(T)}&= 
     \expect{\sup_{f\in \eF, t \in T} 
    \sum_{i =1}^{n} \varepsilon_i f(t_i)}\\
    &=\expect{\sup_{f\in \eF, t \in T } \sum_{i=1}^{n} \varepsilon_i 
    \Big( f(t_i)-f(\pi(t)_i)+ f(\pi(t)_i) \Big)}\\
    &\le \expect{\sup_{f\in \eF,t \in T} \sum_{i =1}^{n} \varepsilon_i \Big(f(t_i)- f(\pi(t)_i)\Big)}
    + \expect{\sup_{f\in \eF,t \in T} \sum_{i =1}^{n} \varepsilon_i f(\pi(t)_i)}  .
  \end{align*}
  We recognize the second term as $\ber{\eF(T_2)}$ by the choice of $T_2$.
  For the first term,
  \begin{align*}
     \expect{\sup_{f\in \eF,t \in T} \sum_{i =1}^{n} \varepsilon_i \Big(f(t_i)- f(\pi(t)_i)\Big)}
        &\le \sup_{f\in \eF,t \in T} \sum_{i =1}^{n} \abs{f(t_i)- f(\pi(t)_i)} \\
        &\le L \sup_{t\in T_1} \sum_{i =1}^{n} \norm{t_i}_2\\
        &\le L \sup_{t\in T_1} \sum_{i =1}^{n} \norm{t_i}_1\\
        &=L \sup_{\overline{t} \in \overline{T}_1} \norm{\overline{t}}_1 = L \sup_{t \in T_1} \norm{t}_{1,1}
  \end{align*}
  where the inequalities are due to the choice of $T_1$, the Lipschitz continuity of $f$, and the relation between
  $\ell^2$ and $\ell^{1}$.
  Since $\overline{T}_1$ satisfies
  $\sup_{\overline{t} \in \overline{T}_1} \norm{\overline{t}}_1 \lesssim \ber{T}$ by \Cref{bed_lata}, the proof is complete.
\end{proof}

 \subsection{Proof of Theorem 1} 

The overall strategy of the proof now closely follows that of \cite[Theorem 2]{maurer_chain_2014}, although some care must be taken to control $\ber\cdot$ in terms of $\gau\cdot$. We first state a simple lemma \cite{maurer_chain_2014}:
 \begin{lemma}\label{uncenter}
   Suppose a random variable $Y$ satisfies $\prob{ Y- a > u } \le \e^{-u^2}$ for any $u>0$, where $a \in \Real$ is a fixed constant. Then $\prob{Y>u}\le \e^{a^2} \e^{-u^2 / 2}$ for all $u > 0$.
 \end{lemma}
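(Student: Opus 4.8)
The plan is to reduce the probabilistic claim to an elementary one–variable inequality by splitting on the sign of $u-a$. Fix $u>0$ and write $\prob{Y>u}=\prob{Y-a>u-a}$.

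\emph{Case 1: $u\le a$.} Then $a>0$ since $u>0$, and we only use the trivial bound $\prob{Y-a>u-a}\le 1$. It therefore suffices to check $1\le \e^{a^2}\e^{-u^2/2}$, i.e.\ $u^2/2\le a^2$, which holds because $0<u\le a$ gives $u^2/2\le a^2/2\le a^2$.

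\emph{Case 2: $u>a$.} Then $u-a>0$, so the hypothesis applies and yields $\prob{Y>u}=\prob{Y-a>u-a}\le \e^{-(u-a)^2}$. It remains to verify the deterministic inequality $\e^{-(u-a)^2}\le \e^{a^2}\e^{-u^2/2}$, equivalently $(u-a)^2\ge \tfrac12 u^2-a^2$. Expanding and multiplying by $2$, this is $u^2-4au+4a^2\ge 0$, i.e.\ $(u-2a)^2\ge 0$, which is clear. (Note this is where the constant $\e^{a^2}$ rather than $\e^{a^2/2}$ is needed; the sign of $a$ is irrelevant in this case.) Combining the two cases gives the stated bound for every $u>0$.

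I do not anticipate any genuine obstacle: the statement is quoted from \cite{maurer_chain_2014} as a simple lemma, and the only points requiring care are the case split at $u=a$ and the elementary completion of the square. One could alternatively phrase the whole argument uniformly, observing that $\prob{Y-a>v}\le \e^{-(v\vee 0)^2}$ for all $v\in\Real$ and then setting $v=u-a$, but the two–case version above is the most transparent.
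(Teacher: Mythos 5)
Your proof is correct: the case split at $u=a$ together with the completion of the square $(u-2a)^2\ge 0$ establishes exactly the claimed bound, and you correctly note that the hypothesis is only invoked when $u-a>0$, with the regime $0<u\le a$ handled by the trivial bound $\prob{Y>u}\le 1\le \e^{a^2}\e^{-u^2/2}$. The paper itself gives no proof of this lemma --- it is quoted from Maurer's work as a known fact --- so there is nothing to compare against, but your argument is the standard elementary one and is complete.
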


\begin{proof}[\textit{Proof of \Cref{main_res_a}}] Let $T \subset T_1 + T_2$ be the decomposition constructed in Proposition~\ref{decomp_comp}. Define the stochastic process $Y$ indexed by $T_2$,
  \begin{align*}
    Y \colon t  \mapsto \frac{1}{\sqrt{2} L} \sup_{f \in \eF} 
    \sum_{i=1}^{n} \varepsilon_i f(t_i).
  \end{align*}
  Then $\ber{\eF(T_2)} = \sqrt{2} L\, \expect{ \sup_{t \in T_2} Y_t} $. For $s,t \in T_2$,
  \begin{align*}
  Y_{s} - Y_{t} \le \frac{1}{\sqrt{2} L} \sup_{f \in \eF}
  \sum_{i=1}^{n} \varepsilon_i (f(s_i)- f(t_i)) =: Z_{s,t}.
  \end{align*}
Using McDiarmid's inequality and Lemma~\ref{uncenter}, we obtain
\begin{align*}
  \prob{Y_{s} - Y_{t}>u }\le \prob{Z_{s,t} >u} \le \exp\left(\frac{(\mathbf{E}Z_{s,t})^2}{\norm{s-t}_2^2}\right)
  \exp\left(-\frac{u^2}{2\norm{s-t}^2_2}\right).
\end{align*}
 Thus, $(Y_t)_{t \in T_2}$ satisfies the increment condition of Lemma~\ref{gen_chain_appr} with $\kappa := \exp\left({\sup_{s,t \in T_2}} \frac{(\mathbf{E}Z_{s,t})^2}{\norm{s-t}_2^2}\right)$. Fix an arbitrary $\tau \in T_2$. Since the Frobenius norm for $T_2$ induces the canonical distance function for the Gaussian process
 $T_2\ni t\mapsto \sum_{i,j}\xi_{ij}t_{ij}$, we can use the majorizing measure theorem (\Cref{major_meas}) in conjunction with Lemma~\ref{gen_chain_appr} to obtain
\begin{align*}
  \expect{\sup_{t \in T_2} Y_t -Y_\tau}
 &\lesssim \gau{T_2} + \Delta_2(T_2)\sqrt{\log \kappa}.
\end{align*}
Substituting the definitions of $Y_t$ and $\kappa$ and rearranging, we get
\begin{align*}
  \ber{\eF(T_2)} &\lesssim L \gau{T_2} +  \Delta_2(T_2) 
  \comr{\eF,T_2}+ \ber{\eF(\tau)} \\
    &\lesssim L \ber{T} + \Delta_2(T_2)\comr{\eF,T_2}+ \ber{\eF(\tau)},
\end{align*}
where the last inequality follows from the fact that $\gau{T_2} \lesssim \ber{T}$ by 
\Cref{bed_lata}.
Combining the above result with Proposition~\ref{decomp_comp} concludes the proof of
\Cref{main_res_a} with $S = T_2$.
\end{proof}

\subsection{Proof of \Cref{main_res_b}}
Recall that, from Proposition~\ref{decomp_comp}, we obtain the bound $\ber{\eF(T)} \lesssim L \ber{T} + \ber{\eF(T_2)}$, where $T_2$ satisfies $\gau{T_2} \lesssim \ber{T}$.
In this subsection, we will use both classical (Kolmogorov) chaining and generic chaining
to estimate $\ber{\eF(T_2)}$. 

We first state an entropy number estimate for uniformly Lipschitz function classes
on a totally bounded domain (\cf \cite[Lemma 4.3.9]{talagrand_upper_2014}). 
\begin{lemma}\label{ent_num_est}
Let $(T,d)$ be metric space, such that, for a certain $B$ and $k\ge 1$, we have
    $N(T,d,\delta) < \left(\frac{B}{\delta}\right)^k$ for all $\delta > 0$, where $N(T,d,\delta)$ is the $\delta$-covering number for $(T,d)$, \ie, the minimum number of
  closed $d$-balls of radius $\delta$ needed to cover $T$. Consider the set 
  $\eF$ of $L$-Lipschitz functions $f : T \to \Real$ with $\norm{f}_{T}\le LB$. Then, for each integer $m \ge 0$,
  we have $e_m(\eF, d_{T})  \le C_k LB 2^{-m / k}$.
\end{lemma}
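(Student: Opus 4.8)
The plan is to first bound the covering numbers $N(\eF,d_T,\epsilon)$ of $\eF$ in the uniform metric $d_T$ and then convert such a bound into the claimed bound on the entropy numbers; this is essentially Talagrand's argument for \cite[Lemma 4.3.9]{talagrand_upper_2014}, but I want to point out where care is needed. The essential point is that the covering bound must have the \emph{sharp} form $\log_2 N(\eF,d_T,\epsilon)\le C_k(LB/\epsilon)^k$ with \emph{no} extra logarithmic factor: a cruder estimate of the type $\log_2 N(\eF,d_T,\epsilon)\lesssim_k(LB/\epsilon)^k\log(LB/\epsilon)$ would, after the passage to entropy numbers, replace $m/k$ in the exponent by something like $m/(k+1)$, which is strictly worse than what is claimed.

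To obtain the sharp covering bound I would build a hierarchy of nets of $(T,d)$ at dyadic scales. Set $\delta_j:=B2^{-j}$ for $0\le j\le J$, where $J:=\lceil\log_2(4LB/\epsilon)\rceil$, so that $\delta_J\le\epsilon/(4L)$. Applying the hypothesis to maximal $\delta_j$-separated subsets of $T$ produces $\delta_j$-dense sets $\mathcal{N}_j\subset T$ with $\abs{\mathcal{N}_j}\le(2B/\delta_j)^k=2^{(j+1)k}$, together with ``parent'' maps $p\colon\mathcal{N}_j\to\mathcal{N}_{j-1}$ satisfying $d(x,p(x))\le\delta_{j-1}$. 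To each $f\in\eF$ I associate its \emph{profile}, namely the family obtained by rounding every value $f(x)$, $x\in\bigcup_{j}\mathcal{N}_j$, to the nearest multiple of $\epsilon/2$. If $f,g\in\eF$ have the same profile then they agree to within $\epsilon$ at each point of $\mathcal{N}_J$, hence, using that the $f$'s are $L$-Lipschitz and $\delta_J\le\epsilon/(4L)$, to within $2\epsilon$ everywhere on $T$. Thus profiles partition $\eF$ into classes of $d_T$-diameter at most $2\epsilon$, and choosing one representative per class shows that $N(\eF,d_T,2\epsilon)$ is bounded by the number of profiles, realized by an internal net.

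Counting the profiles without losing a logarithm is the crux. The rounded value at the single coarsest point lies in an interval of length $\lesssim LB$ (this is the one place $\norm{f}_T\le LB$ enters), contributing at most $\lesssim 2^k\log_2(LB/\epsilon)$ to the logarithm of the number of profiles. For $j\ge1$ we have $\abs{f(x)-f(p(x))}\le L\delta_{j-1}=LB2^{-j+1}$, so once the rounded value at $p(x)$ is fixed the rounded value at $x\in\mathcal{N}_j$ has at most $O(LB2^{-j}/\epsilon+1)$ possibilities; summing the logarithms over $x$ and $j$ gives a contribution $\lesssim\sum_{j=1}^{J}2^{(j+1)k}\log_2\!\big(C(LB2^{-j}/\epsilon+1)\big)$. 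Writing $A:=\log_2(LB/\epsilon)$, this summand is $O(1)$ once $j\gtrsim A$ and is $O\big(2^{jk}(A-j)\big)$ for $j\lesssim A$; since $k\ge1$ the geometric weights $2^{jk}$ concentrate near the top scale $j\approx A$, whence $\sum_{j}2^{jk}(A-j)_{+}\lesssim_{k}2^{Ak}=(LB/\epsilon)^{k}$, with no logarithmic factor. This is exactly why the parent maps must join \emph{consecutive} scales --- joining each point of $\mathcal{N}_j$ directly to the coarsest net would make every fine-scale increment of size $\sim LB$ and would reinstate the spurious logarithm. Collecting the two contributions yields $\log_2 N(\eF,d_T,\epsilon)\le C_k(LB/\epsilon)^k$.

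It remains to pass to entropy numbers. By definition $e_m(\eF,d_T)\le\epsilon$ as soon as $\eF$ admits an $\epsilon$-net of cardinality at most $2^{2^m}$ contained in $\eF$; taking $\epsilon=\epsilon_m:=C_k'LB2^{-m/k}$ with $C_k'$ large enough that $C_k(2LB/\epsilon_m)^k\le 2^m$ handles all $m$ exceeding some threshold $m_0(k)$, while for $m\le m_0(k)$ one simply uses $e_m(\eF,d_T)\le\Delta(\eF,d_T)\le 2LB\le C_k'LB2^{-m/k}$; enlarging $C_k'$ to swallow the remaining constants gives the statement for every $m\ge0$. The step I expect to be the real obstacle is the counting estimate of the previous paragraph: a naive per-increment discretization introduces a factor $\log(LB/\epsilon)$ that degrades the final exponent, so one must instead round the function \emph{values} (so that the per-point error does not accumulate along parent-chains) and then check, as above, that the hierarchical count collapses exactly to $(LB/\epsilon)^k$.
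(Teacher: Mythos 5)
Your proof is correct, but note that the paper does not actually prove this lemma: it is stated with a pointer to \cite[Lemma 4.3.9]{talagrand_upper_2014}, and the appendix only proves Lemma~\ref{gen_chain_appr}. What you have written is, in substance, a self-contained reconstruction of the cited result: a multiscale hierarchy of internal nets of $(T,d)$, rounding of function values to a grid of mesh $\epsilon/2$, and, crucially, counting each rounded value only \emph{relative to its parent} at the previous scale so that the number of choices at scale $j$ is $O(LB2^{-j}/\epsilon+1)$ rather than $O(LB/\epsilon)$; since $k\ge 1$ the geometric weights $2^{jk}$ make the sum collapse to $C_k(LB/\epsilon)^k$ with no logarithm, and the conversion to entropy numbers (internal nets of cardinality $2^{2^m}$, plus the trivial diameter bound $e_m\le 2LB$ for small $m$) is routine. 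This is exactly the mechanism behind Talagrand's proof, so the approaches coincide; your contribution relative to the paper is simply making the argument explicit. A few harmless imprecisions: $\mathcal{N}_0$ may contain up to $2^k$ points rather than a ``single coarsest point'' (your bound $2^k\log_2(LB/\epsilon)$ already accounts for this); in the scale sum, for $A\lesssim j\le J$ the \emph{log factor} is $O(1)$ but the summand is still $2^{(j+1)k}O(1)$, and one should say that this tail is controlled because $J\le A+3$, so it contributes only $\lesssim_k 2^{Ak}$; and the opening remark that a spurious $\log(LB/\epsilon)$ would degrade the exponent to roughly $m/(k+1)$ overstates the loss (it would cost a factor polynomial in $m$, e.g.\ $m^{1/k}$), though this is only motivational and does not affect the proof.
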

  Let $S=T_2$. 
 For $m\ge 0$, we choose $\eF_m \subset \eF$
 with admissible cardinality such that
 $\sup_{f \in \eF} d_{\sS}(f, \eF_m) \le 2  e_m(\eF, d_{\sS})$ and choose
 $S_m \subset T_2$ such that each element of $S_m$ is an arbitrary point in each of the elements of the partition $\cA_m$, where $(\cA_m)_{ m \ge 0}$ is an admissible sequence in $T_2$ satisfying 
$$
\sup_{t \in T_2} \sum_{m\ge 0} 2^{m / 2} \Delta_2(A_m(t)) \lesssim \gau{T_2}
$$
by the majorizing measure theorem.  Let $p_m: \eF \to \eF_m$ be such that $d_{\sS}(f, p_m(f))= d_{\sS}(f, \eF_m)$ for any $f\in \eF$, and let
 $\pi_m: T_2\to S_m$ be such that $d_2(t, \pi_m(t))= d_2(t, S_m)$ for any $t\in T_2$.
 Let $f_0 = p_0(f), \tau= \pi_0(t)$ for any $f\in \eF,t \in T_2$. 
 For an arbitrary $M \in \Natural$,  consider the quantities
 \begin{align*}
   &\mathtt{U_1}:= \expect{\sup_{f \in \eF, t\in T_2}  \sum_{i=1}^{n} 
   \varepsilon_i(f(t_i)- p_M(f)(t_i))}\\
   &\mathtt{U_2}:= \expect{\sup_{f \in \eF, t\in T_2}  \sum_{i=1}^{n} 
   \varepsilon_i(p_M(f)(t_i)- p_M(f)(\pi_M(t)_i))}\\
   &\mathtt{U_3}:= \expect{\sup_{f \in \eF, t\in T_2}  \sum_{i=1}^{n} 
   \varepsilon_i(p_M(f)(\pi_M(t)_i)- p_0(f)(\pi_0(t)_i))}.
 \end{align*}
It is obvious that 
 \begin{align*}
   \ber{\eF(T_2)}= \expect{\sup_{f,t} \sum_{i=1}^{n} \varepsilon_i (f(t_i)-f_0(\tau_i))}
   \le \mathtt{U_1}+\mathtt{U_2}+\mathtt{U_3}.
 \end{align*}
 For $\mathtt{U_1}$, by the Cauchy--Schwarz inequality
 \begin{align*}
   \mathtt{U_1} &\le \expect{ \left(\sum_{i=1}^{n} \varepsilon_i^2\right)^{1 / 2}
   \sup_{f,t}\left(\sum_{i=1}^{n} (f(t_i)-p_M(f)(t_i))^2\right)^{1 / 2}} \\
                &\le n \sup_{f \in \eF} d_{\sS}(f,p_M (f))\le cne_M(\eF,d_{\sS}).
 \end{align*}
Next, observe that
$$\mathtt{U_2}\le \expect{\sup_{f \in \eF_M, t\in T_2}
 \sum_{i=1}^{n} \varepsilon_i (f(t_i)- f(\pi_M(t)_i))}
 $$
 and consider the following stochastic process $Y$ indexed by $\eF_M \times T_2$: $Y: (f,t)\mapsto \sum_{i=1}^{n} \varepsilon_i f(t_i)$.  We will use generic chaining to bound the supremum $\expect{\sup_{f\in \eF_M, t\in T_2}(Y_{f,t}- Y_{f,\pi_M(t)})} $.  Let
 $$
 \varrho = L \sup_{t \in T_2}
 \sum_{m\ge M+1} 2^{m / 2}\norm{\pi_m(t)-\pi_{m-1}(t)}_2.
 $$
 By the union bound,
 \begin{align*}
   \prob{\sup_{f \in \eF_M, t\in T_2}(Y_{f,t}- Y_{f,\pi_M(t)})>u\varrho}
   \le \sum_{f \in \eF_M} \prob{\sup_{t \in T_2} (Y_{f,t}- Y_{f,\pi_M(t)})> u\varrho}, \qquad u > 0.
 \end{align*}
 Notice that, for any $f\in \eF_M$, the process $t\mapsto \sum_{i=1}^{n}\varepsilon_i f(t_i)$
 satisfies the increment condition
\begin{align*}
  \prob{Y_{f,s}-Y_{f,t} > u} \le \exp \left(-\frac{u^2}{2L^2\norm{s-t}_2^2}\right), \qquad  u > 0
\end{align*} 
for all $s,t \in T_2$. Using chaining and the union bound again, we obtain 
\begin{align*}
  \prob{\sup_{t \in T_2} (Y_{f,t}-Y_{f,\pi_M(t)})> u\varrho}
  &\le\prob{\sup_{t\in T_2} \sum_{m\ge M+1} (Y_{f,\pi_m(t)}- Y_{f,\pi_{m-1}(t)})> u\varrho}\\
  &\le \sum_{m\ge M+1} \sum_{s \in S_m} \prob{Y_{f,s}- Y_{f,\pi_{m-1}(s)} 
  > uL 2^{m / 2} \norm{s-\pi_{m-1}(s)}_2}\\
  &\le \sum_{m \ge M+1} 2^{2^{m+1}} \e^{-u^2 2^{m-1}}
\end{align*}
Therefore,
\begin{align*}
  \prob{\sup_{f \in \eF_M, t \in T_2}(Y_{f,t}-Y_{f,\pi_M(t)}) >u\varrho} 
  \le 2^{2^{M}}\left(\sum_{m \ge M+1} 2^{2^{m+1}} \e^{-u^2 2^{m-1}}\right)
  \le \sum_{m \ge M+1}2^{2^{m+2}} \e^{-u^2 2^{m-1}}.
\end{align*}
By Proposition~\ref{union_comp} and the choice of $(\pi_m(t))_{m\ge M}$, we then have
\begin{align*}
  \mathtt{U_2} \lesssim L \sup_{t \in T_2} \sum_{m\ge M} 2^{m / 2} \Delta_2(A_m(t)) \lesssim L \gau{T_2} \lesssim L\ber{T}.
\end{align*}
For $\mathtt{U_3}$, we still consider the process $Y: (f,t)\mapsto \sum_{i=1}^{n} f(t_i)$ indexed by \(\eF \times T_2\), but we now endow \(\eF \times T_2\) with the following metric:
for $f_1, f_2\in \eF, s, t\in T_2$,
\begin{align*}
  d_3((f_1,s),(f_2,t)) := \sqrt{n} d_{\sS}(f_1,f_2) + L\norm{s-t}_2.
\end{align*}
Since $d_3((f_1,s),(f_2,t))\ge \norm{f_1(s)-f_2(t)}_2$, this process satisfies the increment condition
\begin{align*}
  \prob{Y_{f_1,s}- Y_{f_2,t}> u} \le \exp\left(-\frac{u^2}{2d_3((f_1,s),(f_2,t))^2}\right), \qquad u > 0.
\end{align*}
By a generic chaining argument, we have
\begin{align*}
  \mathtt{U_3} &\lesssim \sup_{f\in \eF, t\in T_2} \sum_{m=1}^{M} 2^{m  / 2} 
  d_3((p_m(f),(\pi_m(t)),(\pi_{m-1}(f),\pi_{m-1}(t)))) \\
               &\lesssim \sqrt{n} \sup_{f \in \eF} \sum_{m=1}^{M} 2^{m / 2}d_{\sS}(p_m(f),p_{m-1}(f)) +
               L \sup_{t \in T_2} \sum_{m=1}^{M} 2^{m / 2} \norm{\pi_m(t)-\pi_{m-1}(t)}_2\\
               &\lesssim \sqrt{n} \sup_{f \in \eF} \sum_{m=1}^{M} 2^{m / 2}\big(d_{\sS}(f,p_m(f))
               +d_{\sS}(f,p_{m-1}(f))\big)
               + L\gau{T_2} \\
               &\lesssim \sqrt{n}  \sum_{m=0}^{M} 2^{m / 2} \sup_{f \in \eF}d_{\sS}(f, \eF_m) + L\ber{T}\\
               &\lesssim \sqrt{n} \sum_{m=1}^{M} e_m(\eF,d_{\sS}) + L\ber{T}.
\end{align*}
Combining the above results and taking the infimum over $M$, we obtain
\begin{align*}
  \ber{\eF(T_2)}\lesssim L \ber{T} + n\inf_{M}\set{e_M(\eF, d_{\sS}) + \sum_{m=0}^{M} \frac{2^{m / 2}}{\sqrt{n}}e_m(\eF,d_{\sS}) } .
\end{align*}

In particular, if $\sX= \eB_{2}^{k}(R) \supset \sS$, we have $N(\mathsf{X},d_2,\delta)\lesssim (R / \delta)^k$ by the standard volumetric estimate. If all $f \in \eF$ satisfy $\norm{f}_{\sX} \le LR$, then we have $e_m(\eF, d_{\sX})\le C_k LR 2^{-m / k} $ by Lemma~\ref{ent_num_est}.
Let 
\begin{align*}
  h(M):=  2^{-M / k} + \frac{1}{\sqrt{n}}\sum_{m=0}^{M}  2^{m (\frac{1}{2}-\frac{1}{k})},
\end{align*}
so that $e_M(\eF, d_{\sX}) + \sum_{m=0}^{M} \frac{2^{m / 2}}{\sqrt{n}}e_m(\eF,d_{\sX}) \le C_kLRh(M)$. For $k=1$, $\inf_M h(M) \lesssim \frac{1}{\sqrt{n}}$. For $k=2$, for $n$ sufficiently large and $M=\floor{\log_2 n}$, we have
\begin{align*}
  h(M) \lesssim \frac{1}{\sqrt{n}} +   \frac{1+\log_2 n}{\sqrt{n}}\le c \frac{\log n}{\sqrt{n}}.
\end{align*}
For $k>2$, for $n$ sufficiently large, we again let $M=\floor{\log_2 n}$ and
\begin{align*}
  h(M) &\le 2^{-M / k} +\frac{1}{\sqrt{n}} 
  2^{M(\frac{1}{2}-\frac{1}{k})}\left( \sum_{m=0}^{\infty} 2^{-m(\frac{1}{2}- \frac{1}{k})}\right) \\
  &\le C_k n^{-1/k}.
\end{align*}
The fact that $e_m(\eF, d_{\sS}) \le e_m(\eF, d_{\sX}) $ concludes the proof.

\section{Examples}\label{sec:examples}

We give two examples to illustrate the use of our chain rule. The first example is an improvement of a result from \cite{golowich2019sizeindependent}, which was an 
intermediate step in bounding Rademacher complexities of neural nets with a uniform upper bound on the spectral norm of their weight matrices. Given an $n$-tuple of points $z^n := (z_1,\ldots,z_n)$ in some space $\eZ$, the (normalized) \textit{empirical Rademacher complexity} $\hat{\eR}_n$ of a class $\eG$ of real-valued functions $g : \cZ \to \Real$
is defined as
\begin{align*}
  \hat{\eR}_n(\eG) \equiv \hat{\eR}_n(\eG; z^n) := \frac{1}{n} \lexpect{\varepsilon}{\sum_{i=1}^{n} \varepsilon_i g(z_i)},
\end{align*}
see, e.g., \cite{bartlett_rademacher_2002}. Then we have the following result, which improves on Theorem~4 in \cite{golowich2019sizeindependent}:

\begin{proposition} For some $R>0$,
 let $\eG$ be a class of functions from $\eZ$ to $[-R,R]$. Let $\eF$ be the class
 of $L$-Lipschitz functions from $[-R,R]$ to $\Real$. Then the Rademacher complexity of the composite class $\eF \circ \eG := \set{f \circ g: f\in \eF, g\in \eG}$ satisfies
 \begin{align*}
   \hat{\eR}_n(\eF \circ \eG) \lesssim L \left(\frac{R}{\sqrt{n}} + \hat{\eR}_n(\eG)\right).
 \end{align*}
\end{proposition}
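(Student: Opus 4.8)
The plan is to realize this as the scalar case $k=1$ of Theorem~\ref{main_res_b}. Fix the sample $z^n=(z_1,\dots,z_n)$ and put
\begin{align*}
  T := \set{(g(z_1),\dots,g(z_n)) : g \in \eG} \subset \Real^{1\times n},
\end{align*}
which we identify with a subset of $\Real^n$. Since each $g$ takes values in $[-R,R]$ we have $\norm{T}_{\infty,\infty}\le R$, and by definition $\ber{T} = \expect{\sup_{g\in\eG}\sum_{i=1}^n \varepsilon_i g(z_i)} = n\,\hat{\eR}_n(\eG)$, so in particular $\ber{T}\le nR<\infty$. Moreover the set of restrictions of $\eF\circ\eG$ to $z^n$ is precisely $\eF(T)=\set{f(t): f\in\eF,\,t\in T}\subset\Real^n$, whence $\hat{\eR}_n(\eF\circ\eG)=\tfrac1n\ber{\eF(T)}$. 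Thus it suffices to bound $\ber{\eF(T)}$.

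Before applying the theorem I normalize $\eF$: we may assume $f(0)=0$ for every $f\in\eF$. (This is the natural normalization and costs nothing here; note that without some such restriction the left-hand side is in fact infinite, since adding a large constant to $f$ contributes a $c\sum_i\varepsilon_i$ term whose positive part has unbounded expectation.) With this normalization $|f(x)|=|f(x)-f(0)|\le L|x|\le LR$ for $x\in[-R,R]$, so $\eF(T)\subset[-LR,LR]^n$ and $\ber{\eF(T)}\le nLR<\infty$; extending each $f$ to an $L$-Lipschitz function on all of $\Real$ (McShane extension) changes nothing, since every argument that occurs lies in $[-R,R]$. Hence the hypotheses of Theorems~\ref{main_res_a} and~\ref{main_res_b} are satisfied.

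Now apply Theorem~\ref{main_res_a} to obtain a set $S\subset\Real^{1\times n}$ with $\norm{S}_{\infty,\infty}\le\norm{T}_{\infty,\infty}\le R$; its coordinate projections therefore satisfy $\sS=\bigcup_{i=1}^n S_i\subset[-R,R]=\eB_2^1(R)$. Taking $\sX=\eB_2^1(R)$ in the second part of Theorem~\ref{main_res_b}, the requirement $\norm{f}_{\sX}\le LR$ holds by the normalization, so \eqref{crude_est} with $k=1$ (for which $r_{n,1}=\sqrt n$ and $C_1$ is a universal constant) gives $\ber{\eF(T)} \le cL\,\ber{T} + C_1\,LR\sqrt{n}$. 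Substituting $\ber{T}=n\,\hat{\eR}_n(\eG)$ and dividing by $n$,
\begin{align*}
  \hat{\eR}_n(\eF\circ\eG) = \tfrac1n\ber{\eF(T)} \le cL\,\hat{\eR}_n(\eG) + C_1\,LR/\sqrt n \lesssim L\Big(\tfrac{R}{\sqrt n}+\hat{\eR}_n(\eG)\Big),
\end{align*}
which is the claim.

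The proof carries no hard analytic content of its own — all quantitative work is inherited from Theorems~\ref{main_res_a}--\ref{main_res_b} and Lemma~\ref{ent_num_est}. The point that actually needs care is the interplay of hypotheses: the two finiteness requirements are what force the normalization $f(0)=0$ (without which the statement is simply false), and one must observe that a single radius $R$ does double duty — it controls the projected support $\sS$ of the set $S$ produced by Theorem~\ref{main_res_a} via $\norm{S}_{\infty,\infty}\le\norm{T}_{\infty,\infty}\le R$, and it is simultaneously the radius for which $\norm{f}_{\sX}\le LR$ in Theorem~\ref{main_res_b}. Reconciling these two roles is exactly what makes the clean $R/\sqrt n$ rate come out with no extra dimensional factors.
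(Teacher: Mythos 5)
Your proof is correct and follows exactly the route the paper takes: its entire proof is the one-line application of \Cref{main_res_b} with $k=1$ and $T=\set{(g(z_i))_{1\le i\le n}:g\in\eG}$, which is what you carry out in detail. Your added normalization $f(0)=0$ (needed so that $\norm{f}_{\sX}\le LR$ and both sides are finite, since unrestricted constants make the supremum infinite) is a legitimate point the paper's statement and one-line proof leave implicit, not a deviation in approach.
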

\begin{proof} The result follows from \Cref{main_res_b} with $k=1$ and  $T=\set{(g(z_i))_{1\le i \le n}: g\in \eG}$. \end{proof}
\begin{remark} For comparison, the bound in \cite{golowich2019sizeindependent} has the form 
$$
\hat{\eR}_n(\eF \circ \eG) \lesssim L(\frac{R}{\sqrt{n}} + \log^{3/2}n \cdot \hat{\eR}_n(\eG)),
$$
so our improvement consists in removing the logarithmic factor in front of the second term on the right-hand side.
\end{remark}

Our second example involves functions in a \textit{Reproducing Kernel Hilbert Space} (RKHS). Due to space limitations, we can only give a brief sketch; the reader is invited to consult \cite[Chaps.~2 and 4]{cucker2007learning} for the background. Let $\mathsf{X}= \eB^{k}_{2}(\sqrt{k}R)$ for some $R > 0$. Let $(\eH_K, \inner{\cdot, \cdot}_K)$ be an RKHS associated with a Mercer kernel $K: \mathsf{X} \times \mathsf{X} \to \Real$; then we consider $\eF = I_K(\eB^{K}(\varrho))$, where $\eB^{K}(\varrho)=\set{f\in\eH_K: \norm{f}_K \le \varrho }$ is the zero-centered closed ball of radius $\varrho$ in $\eH_K$ and $I_K$ is the embedding map from $\eH_K$ into the space $\cC(\mathsf{X})$ of continuous
real-valued functions on $\mathsf{X}$ equipped with the uniform norm $\norm{\cdot}_{\sX}$. We have the following result for the Gaussian RKHS, which parallels Maurer's result in \cite[Section 3.2]{maurer_chain_2014} if we impose the same
assumption that $T$ is a projection of another RKHS function class onto a set of samples.

\begin{proposition} \label{rkhs} Consider the Gaussian kernel $K(x,y) = \exp\left(-\frac{1}{2\sigma^2}\norm{x-y}_2^2\right)$, where $\sigma^2 > 0$ is the kernel bandwidth. Then, for any $T \subset (\eB^k_2(R))^n$, 
\begin{align}\label{eq:RKHS_ber}
  \ber{\eF(T)} \lesssim \varrho \left(\frac{\ber{T}}{\sigma} + \sqrt{n}\right).
\end{align}
\end{proposition}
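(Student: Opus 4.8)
The plan is to apply the chain rule of \Cref{main_res_a} to $\eF=I_K(\eB^K(\varrho))$ and $T$ with Lipschitz constant $L=\varrho/\sigma$, and then to evaluate each of the three terms on the right-hand side of \eqref{eq:ber_chain_rule} using the reproducing-kernel structure, which turns every expected supremum that appears into the expected $\eH_K$-norm of a Rademacher (or, after the standard comparison, Gaussian) sum, hence into a sum of squared feature-map norms.

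First I would record the two elementary properties of the Gaussian-RKHS ball that make the hypotheses of \Cref{main_res_a} hold. By the reproducing property, for $x,y\in\mathsf{X}$ and $\norm{f}_K\le\varrho$ we have $f(x)=\inner{f,K_x}_K$, so $\abs{f(x)}\le\varrho\sqrt{K(x,x)}=\varrho$ and $\abs{f(x)-f(y)}\le\varrho\norm{K_x-K_y}_K$; since $\norm{K_x-K_y}_K^2=2\bigl(1-e^{-\norm{x-y}_2^2/2\sigma^2}\bigr)\le\norm{x-y}_2^2/\sigma^2$, the class $\eF$ is uniformly $(\varrho/\sigma)$-Lipschitz and uniformly bounded by $\varrho$. (After a McShane extension and truncation we may assume the members of $\eF$ are defined on all of $\Real^k$; countability is handled by passing to a dense subclass; $\ber{T}<\infty$ because $T$ is bounded, and $\ber{\eF(T)}\le n\varrho<\infty$ by the bound $\ber{\cdot}\le\sup\norm{\cdot}_1$.) \Cref{main_res_a} then yields a set $S\subset\Real^{k\times n}$ with $\gau{S}\le\ber{T}$ and $\norm{S}_{\infty,\infty}\le\norm{T}_{\infty,\infty}\le R$; in particular $\norm{S}_{2,\infty}\le\sqrt{k}\,\norm{S}_{\infty,\infty}\le\sqrt{k}R$, so every column of every $s\in S$ lies in $\mathsf{X}=\eB^k_2(\sqrt{k}R)$ and the functions of $\eF$ can legitimately be evaluated on $S$. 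Fixing any $\tau\in S$, the chain rule reads
\begin{align*}
  \ber{\eF(T)}\lesssim\frac{\varrho}{\sigma}\,\ber{T}+\Delta_2(S)\,\comr{\eF,S}+\ber{\eF(\tau)}.
\end{align*}

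The core step is to bound the last two terms, and here the linearity of $f\mapsto\inner{f,\cdot}_K$ does all the work. For fixed $s,t\in S$, the numerator defining $\comr{\eF,S}$ equals $\ber{V_{s,t}}$ with $V_{s,t}:=\{(f(s_i)-f(t_i))_{i=1}^n:f\in\eF\}$, and $V_{s,t}$ is the image of $\eB^K(\varrho)$ under the linear map $f\mapsto\bigl(\inner{f,K_{s_i}-K_{t_i}}_K\bigr)_i$. Hence, using the comparison $\ber{\cdot}\le\sqrt{\pi/2}\,\gau{\cdot}$, the identity $\sup_{\norm{f}_K\le\varrho}\inner{f,h}_K=\varrho\norm{h}_K$, and Jensen's inequality,
\begin{align*}
  \ber{V_{s,t}} &\le\sqrt{\tfrac{\pi}{2}}\,\varrho\,\expect{\Bigl\|\sum_{i=1}^n\xi_i(K_{s_i}-K_{t_i})\Bigr\|_K}\\
  &\le\sqrt{\tfrac{\pi}{2}}\,\varrho\Bigl(\sum_{i=1}^n\norm{K_{s_i}-K_{t_i}}_K^2\Bigr)^{1/2}\le\sqrt{\tfrac{\pi}{2}}\,\frac{\varrho}{\sigma}\,\norm{s-t}_2,
\end{align*}
so $\comr{\eF,S}\lesssim\varrho/\sigma$. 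Combining with $\Delta_2(S)\le\sqrt{2\pi}\,\gau{S}\le\sqrt{2\pi}\,\ber{T}$ gives $\Delta_2(S)\,\comr{\eF,S}\lesssim\frac{\varrho}{\sigma}\ber{T}$. The same linearization at the single point $\tau$ gives $\gau{\eF(\tau)}=\varrho\,\expect{\norm{\sum_{i}\xi_iK_{\tau_i}}_K}\le\varrho\bigl(\sum_{i=1}^nK(\tau_i,\tau_i)\bigr)^{1/2}=\varrho\sqrt{n}$, hence $\ber{\eF(\tau)}\le\sqrt{\pi/2}\,\varrho\sqrt{n}$. Substituting the three estimates into the display above produces exactly \eqref{eq:RKHS_ber}.

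I do not expect a genuine obstacle. The one point requiring care is that the auxiliary set $S$ supplied by \Cref{main_res_a} is \emph{not} $T$, so one must check that its columns remain inside the kernel domain $\mathsf{X}$; this is precisely why $\mathsf{X}$ was taken to be the inflated ball $\eB^k_2(\sqrt{k}R)$ (cf.\ the Remark following \Cref{main_res_b}). The mildly technical ingredient is the estimate on $\comr{\eF,S}$, but for the Gaussian kernel it is completely transparent because $K(x,x)\equiv 1$ and $\norm{K_x-K_y}_K\le\norm{x-y}_2/\sigma$ on all of $\mathsf{X}$; for a general Mercer kernel one would instead invoke a uniform bound on $\sup_x K(x,x)$ and on the kernel metric $\norm{K_x-K_y}_K$ over $\mathsf{X}$, which is exactly what changes in Maurer's Gaussian treatment.
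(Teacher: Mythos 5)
Your proof is correct and takes essentially the same route as the paper: establish the $(\varrho/\sigma)$-Lipschitz property from $\norm{K_x-K_y}_K\le\norm{x-y}_2/\sigma$, apply \Cref{main_res_a}, linearize over the RKHS ball via the reproducing property and Riesz representation to get $\comr{\eF,S}\lesssim\varrho/\sigma$ and $\ber{\eF(\tau)}\lesssim\varrho\sqrt{n}$, and conclude with $\Delta_2(S)\le\sqrt{2\pi}\,\gau{S}\lesssim\ber{T}$. The only (harmless) deviations are your insertion of the comparison $\ber{\cdot}\le\sqrt{\pi/2}\,\gau{\cdot}$ before Jensen's inequality, whereas the paper applies Jensen directly to the Rademacher sum since $\expect{\varepsilon_i\varepsilon_j}=\delta_{ij}$ already suffices, and your more explicit treatment of the extension-to-$\Real^k$, countability, and finiteness hypotheses, which the paper leaves implicit.
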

\begin{remark} See the Remark following \Cref{main_res_b} for the motivation behind our choices of $\sX$ and $T$.  Note, however, that we use \Cref{main_res_a} to prove the above proposition; although one can use existing bounds on the covering numbers for subsets of an RKHS (compactly embedded into a suitable space of real-valued continuous functions) in conjunction with \Cref{main_res_b}, but the constant $C_k$ appearing in \Cref{crude_est} will then have exponential dependence on $k$.
\end{remark}

\begin{proof}Let $K_x$ denote the function $y \mapsto K(x,y)$, viewed as an element of $\eH_K$. Using the reproducing kernel property, it can be shown that $\norm{K_x-K_y}_K \le \frac{1}{\sigma}\norm{x-y}$. This in turn implies that 
 $\eF$ is uniformly Lipschitz: for any $f \in \eF$ and all $x,y \in \sX$, 
\begin{align*}
  \abs{f(x)-f(y)}&=\inner{K_x-K_y,f}_K \le \norm{f}_K \norm{K_x-K_y}_K \le \frac{\norm{f}_K}{\sigma} \norm{x-y}_2.
\end{align*}
In particular, $\norm{f}_{{\rm Lip}} \le \frac{\varrho}{\sigma}$ for all $f \in \eF$.

We now apply \Cref{main_res_a} and estimate the terms dependent on $\eH_K$. Since $\norm{S}_{\infty,\infty} \le \norm{T}_{\infty,\infty}$, we have $S \subset (\eB^k_2(\sqrt{k}R))^n$. For any $s,t \in S$, we can use the reproducing kernel property, the independence of $\varepsilon_i$, and the Lipschitz continuity of $t \mapsto K_t$ to show that
\begin{align}
  \expect{\sup_{f \in \eF} \sum_{i=1}^{n} \varepsilon_i (f(s_i)-f(t_i))}
 & =\expect{\sup_{f \in \eF} \sum_{i=1}^{n} (\varepsilon_i \inner{K_{s_i}-K_{t_i},f}_K )}\\
 & = \expect{\sup_{f \in \eF} \inner{f, \sum_i \varepsilon_i(K_{s_i}-K_{t_i})}_K } \\
 & =\varrho\expect{\norm{\sum_i \varepsilon_i (K_{s_i}-K_{t_i})}_K}\label{risze}\\
 &\le \varrho\sqrt{\expect{ \norm{ \sum_i \varepsilon_i (K_{s_i}-K_{t_i}) }_K^2 }}\label{jensen}\\
 &\le \varrho \sqrt{\sum_i \norm{K_{s_i}-K_{t_i}}_K^2}\\
 & \le \frac{\varrho}{\sigma} \norm{s-t}_2
\end{align}
for all $f \in \eF$ and all $s,t \in S$, where (\ref{risze}) uses the Riesz representation theorem, (\ref{jensen}) uses Jensen's inequality, and the last two inequalities use independence of $\varepsilon_i$ and Lipschitz continuity of $x \mapsto K_x$, which implies that $\comr{\eF,S} \le \frac{\varrho}{\sigma}$. Moreover, since $\eF = I_K(\cB^K(\varrho))$, we have $\ber{\eF(\tau)} \le \sqrt{n}\varrho$ for any $\tau\in S$, see, e.g., \cite{bartlett_rademacher_2002}. Therefore, applying \Cref{main_res_a} and using the fact that $\Delta_2(S) \le \sqrt{2\pi}\gau{S} \lesssim \ber{T}$, we obtain \eqref{eq:RKHS_ber}. \end{proof}

\appendix
\section{Auxiliary results and proofs}

\label{app:aux}

\subsection{Proof of Lemma~\ref{gen_chain_appr}}
\begin{proposition}\label{union_comp}
  For some $u, \varrho, \zeta >0, w \in \Integer_+ $,
  let 
  \begin{align}
    p(u) := \sum_{m\ge 1} 2^{2^{m+1+w}} \e^{-u^2 2 ^{m-1}}\mbox{ and } q(u) := p(u)\wedge 1.
  \end{align}
  If a nonnegative random variable $Y$ satisfies $\prob{Y > u \varrho + \zeta} \le q(u)$,
  then we have 
  \begin{align}
    \mathbf{E}Y \le C_w \varrho + \zeta.
  \end{align}
\end{proposition}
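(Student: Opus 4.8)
The plan is to bound $\mathbf{E}Y$ by integrating the tail estimate. Since $Y$ is nonnegative, $\mathbf{E}Y = \int_0^\infty \prob{Y > t}\,\mathrm{d}t$. I would split this integral at $t = \zeta$, bound the contribution of $[0,\zeta]$ trivially by $\zeta$ (using $\prob{Y>t}\le 1$), and in the contribution of $[\zeta,\infty)$ substitute $t = u\varrho + \zeta$ to get
\[
  \mathbf{E}Y \;\le\; \zeta + \varrho \int_0^\infty \prob{Y > u\varrho + \zeta}\,\mathrm{d}u \;\le\; \zeta + \varrho \int_0^\infty q(u)\,\mathrm{d}u .
\]
It then remains to show $\int_0^\infty q(u)\,\mathrm{d}u \le C_w$ for a constant depending only on $w$.

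For this, I would fix the threshold $u_0 := 2^{(w+3)/2}$. On $[0,u_0]$ we use $q(u)\le 1$, which contributes at most $u_0$. On $[u_0,\infty)$ I claim $p(u)$ decays like a Gaussian tail. Indeed, for $u \ge u_0$ we have $u^2 \ge 2^{w+3}$, hence $u^2 2^{m-1} \ge 2^{m+w+2} \ge 2\cdot 2^{m+1+w}\ln 2$, so each summand satisfies $2^{2^{m+1+w}}\e^{-u^2 2^{m-1}} \le \e^{-\frac12 u^2 2^{m-1}}$. Using $2^m \ge m+1$ for $m\ge 1$, this is at most $\e^{-u^2(m+1)/4}$; summing the resulting geometric series (legitimate since $\e^{-u^2/4}\le \e^{-2} < \tfrac12$ on this range) gives $p(u) \le 2\,\e^{-u^2/2}$ for $u\ge u_0$. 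Therefore $\int_{u_0}^\infty q(u)\,\mathrm{d}u \le \int_0^\infty 2\,\e^{-u^2/2}\,\mathrm{d}u = \sqrt{2\pi}$, and altogether $\int_0^\infty q(u)\,\mathrm{d}u \le u_0 + \sqrt{2\pi} =: C_w$, which depends only on $w$. Combining with the display above yields $\mathbf{E}Y \le C_w\varrho + \zeta$.

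There is no serious obstacle here; the only point requiring a little care is choosing $u_0$ large enough as a function of $w$ so that the doubly-exponential factor $2^{2^{m+1+w}}$ is dominated term-by-term by $\e^{-u^2 2^{m-1}}$ once $u\ge u_0$. After that the argument reduces to summing a geometric series and integrating a Gaussian tail, both routine.
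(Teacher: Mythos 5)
Your proposal is correct and follows essentially the same route as the paper: integrate the tail formula for $\mathbf{E}Y$, absorb the $\zeta$-shift, use the trivial bound $q\le 1$ below a threshold of order $2^{(w+3)/2}$, and show that beyond it the doubly-exponential factors are dominated so that $p(u)$ has Gaussian decay, leaving an integrable tail of size $O(1)$. The only cosmetic difference is how the sum is controlled for large $u$ (you compare with a geometric series in $m$, the paper absorbs $2^{2^{m+1+w}}$ directly into $e^{-u^22^{m-2}}$), which does not change the argument.
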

\begin{proof}
  
Observe that
for $m \ge 1$ and for $u \ge  C_w$ such that $C_w^2\ge 2^{3+w}$, we have
\begin{align*}
  u^{2} 2^{m-1} \geq \frac{u^{2}}{2}+u^{2} 2^{m-2} \geq \frac{u^{2}}{2}+2^{m+1+w},
\end{align*}
which yields
\begin{align*}
  q(u) = (p(u)\idf{u\ge C_w} + p(u )\idf{u < C_w} ) \wedge 1 \le c \idf{u\ge C_w} \e^{-u^2 / 2} + \idf{u < C_w}.
\end{align*}
Now we convert the tail bound to a moment bound. Since $Y$ is nonnegative, we have
\begin{align*}
  \mathbf{E}Y = \int_{0}^{\infty} \prob{Y > y} \diff y = \int_{- \zeta / \varrho}^{\infty} \prob{Y > u \varrho + \zeta}\, \varrho\, \diff u \le \int_{- \zeta / \varrho }^{\infty} q(u)\, \varrho\, \diff u \le C_w \varrho + \zeta,
\end{align*}
which concludes the proof.
\end{proof}

\begin{proof}[\textit{Proof of Lemma~\ref{gen_chain_appr}}]
  Assume for some $(\cA_m)$,
  $\sup_{t \in T} \sum_{m=0}^{\infty} 2^{m / 2} \Delta_d(A_m(t)) < \infty$. 
  It suffices to consider finite $T$ since the result can be extended to countable case by standard
  convergence arguments (see \eg \cite[Chapter 2]{talagrand_upper_2014}).
  Fix $\tau\in T$.
For any $m\ge 0$, 
 for each $A(t) \in \cA_m$ uniquely containing $t$, we arbitrarily 
take a point $\pi_m(t):= \pi_m(A(t)) \in A(t)$
as the approximation point for $t$ or $A(t)$. For sufficiently large $M\in \Natural$,
we have $\pi_M(t)= t$. Then for any $t \in T$, 
by choosing $\tau= \pi_0(t)$, we have the usual chaining decomposition $Y_t - Y_{\tau} = \sum_{m=1}^{M} Y_{\pi_m(t)} - Y_{\pi_{m-1}(t)}$. Let
\begin{align*}
  \varrho:=\sup_{t \in T} \sum_{m=1}^{M}\varrho_{m}(t):= 
  \sup_{t \in T} \sum_{m=1}^{M} 2^{m/2} d(\pi_m(t), \pi_{m-1}(t)),\\
  \zeta :=\sup_{t \in T} \sum_{m=1}^{M} \zeta_m(t):=
  \sup_{t \in T} \sum_{m=1}^{M} \sqrt{2 \log \kappa}d(\pi_m(t), \pi_{m-1}(t)) .
\end{align*}
For any $u> 0$, let 
\begin{align*}
  \Omega_u:=\set{\sup_{t \in T} Y_t - Y_{\tau} > \sup_{t \in T} \sum_{m=1}^{M} u\varrho_m(t) + \zeta_m(t)}.
\end{align*}
Since $u\varrho + \zeta \ge \sup_{t \in T} \sum_{m=1}^{M} u\varrho_m(t) + \zeta_m(t)$,
we obtain
\begin{align*}
  \prob{\sup_{t \in T} Y_t - Y_{\tau} > u \varrho + \zeta}
  \le \prob{\Omega_u}.
\end{align*}
Note that the occurrence of $\Omega_u$ implies that 
\begin{align*}
  \exists\, t\in T,\, \exists \,m\ge 1, \,   Y_{\pi_m(t)}-Y_{\pi_{m-1}(t)}
  >  u\varrho_m(t) + \zeta_m(t).
\end{align*}
Also, the size of $\set{\pi_m(t), \pi_{m-1}(t): t \in T}$ can be bounded by 
$2^{2^{m}}\cdot 2^{2^{m}}= 2^{2^{m+1}}$.
Using the union bound and the increment condition of $Y_t$ in (\ref{subgau_cond}), we have
\begin{align*}
  \prob{\Omega_u} 
  &\le \sum_{m=1}^{M} 2^{2^{m+1}} 
  \prob{Y_{\pi_m(t)}-Y_{\pi_{m-1}(t)}
  >  u\varrho_m(t) + \zeta_m(t)}\\ 
  &\le \sum_{m=1}^{M} 2^{2^{m+1}}
  \kappa \exp \left(- \frac{(u 2^{m / 2} + \sqrt{2 \log \kappa})^2}{2}\right) \\
  &\le \sum_{m=1}^{M} 2^{2^{m+1}} \e^{-u^2 2^{m-1}}=: p(u).
\end{align*}
Hence, $\prob{\Omega_u} \le p(u) \wedge 1 =: q(u)$.
Since $\sup_t Y_t - Y_{\tau}$ is nonnegative, we use Proposition~\ref{union_comp} to obtain
\begin{align*}
  \expect{\sup_{t \in T} Y_t - Y_{\tau}} \le c\varrho + \zeta \le c \sup_{t \in T} \sum_{m=0}^{\infty} 2^{m / 2} \Delta_d(A_m(t)) + 
  c \sup_{t \in T} \Delta_d(T)\sqrt{\log \kappa},
\end{align*}
where, in the last inequality, the second term is due to 
the fact that, since for any $t$ the series $\sum_{m} 2^{m / 2} \Delta_d(A_m(t))$ converges, so does 
$\sum_{m} \Delta_2(A_m(t))$.
Then $\sum_{m} \Delta_d(A_m(t)) = \Delta_d (T)\sum_{m} (\Delta_d (A_m(t)) / \Delta_d(T))$
is dominated by the
first term of the series $\Delta_d(T_2)$ up to some constant.
\end{proof}

\bibliographystyle{plain}
\bibliography{chain_rule.bbl}

\end{document}